\newtheorem{theorem}{Theorem}[section]
\newtheorem{lemma}[theorem]{Lemma}
\newtheorem{corollary}[theorem]{Corollary}
\newtheorem{example}[theorem]{Example}
\newtheorem{remark}[theorem]{Remark}
\newtheorem{proposition}[theorem]{Proposition}
\def\BBR {{\mathbb R}^4}
\def\dstyle { \displaystyle }
\begin{document}
\title{Isoperimetric inequality, finite total Q-curvature and quasiconformal map}
\author{Yi Wang}
\date{}

\maketitle
\begin{abstract}
In this paper, we obtain the isoperimetric inequality on conformally flat manifold with finite total $Q$-curvature. This is a higher dimensional analogue of Li and Tam's result \cite{L-T} on surfaces with finite total Gaussian curvature. The main step in the proof is based on the construction of a quasiconformal map whose Jacobian is suitably bounded.
\end{abstract}

\section{Introduction}

Cohn-Vossen \cite{CV} studied the Gauss-Bonnet integral for noncompact complete surface $M^2$ with analytic
metrics. He showed that if the Gaussian curvature K is absolutely
integrable(in which case we say the manifold has finite total curvature), then
\begin{equation}\label{1.1}
\dstyle \frac{1}{2\pi}\int_{M} K dv_{M}\leq  \chi(M),
\end{equation}
where $\chi(M)$ is the Euler characteristic of M. Later, Huber \cite{Hu} extended this inequality to
metrics with much weaker regularity. Another nice result he proved is that such a surface
$M^2$ is conformally equivalent to a closed surface with finitely many points removed. The difference between
the two sides of inequality (\ref{1.1}) encodes the asymptotical behavior of the manifold at its ends.  The precise geometric interpretation is given by Finn in \cite{Finn}: suppose a noncompact complete surface has absolutely integrable Gaussian curvature, one may
represent each end conformally as $\mathbb{R}^2 \setminus K$ for some compact set $K$. Define the asymptotical
isoperimetric constant to be
$$\nu=\lim_{r\rightarrow \infty}\frac{L^2(\partial B(0,r))}{4\pi A(B(0,r))},
$$
where $B(0,r)$ is the Euclidean ball centered at origin with radius $r$, $L$ is the
length of the boundary, and $A$ is the area of the domain. As Huber \cite{Hu} showed in his paper,
there are only finitely many ends and they are conformally flat, with that Finn \cite{Finn} proved
\begin{equation}\label{1.2}
\displaystyle \chi(M)-\frac{1}{2\pi}\int_{M} K dv_{M}= \sum_{j=1}^{k}\nu_{j},
\end{equation}
where $k$ is the number of ends. This result tells us that if the surface has finite total
Gaussian curvature, there are rigid geometric and analytical consequences about it.

When $n=4$ Chang, Qing and
Yang \cite{CQY1} get a generalization of (\ref{1.1}) and (\ref{1.2}) by replacing Gaussian curvature
$K$ by Paneitz Q-curvature. The $Q$-curvature comes in naturally as a conformal invariant associated to
the fourth order Paneitz operator. In order to present their result, we recall some
definitions. In conformal geometry, when $n=4$, the Paneitz operator is defined as:
$$P_g=\Delta^2+\delta(\frac{2}{3}Rg-2 Ric)d,$$
where $\delta$ is the divergence and $d$ is the differential. $R$ is the scalar curvature of $g$, and $Ric$
is the Ricci curvature tensor. The Paneitz Q-curvature is defined as
$$Q_g=\frac{1}{12}\left\{-\Delta R +\frac{1}{4}R^2 -3|E|^2 ,\right\}
$$
where $E$ is the traceless part of $Ric$, and $|\cdot|$ is taken with respect to the metric $g$.
Under conformal change $g_{w}=e^{2w}g_0$, $P_{g_w}=e^{-4w}P_{g_0}$,
and
$Q_{g_w}$ satisfies the fourth order equation,
$$P_{g_{0}}w+2Q_{g_0}=2Q_{g_{w}}e^{4w}.$$
The invariance of $Q$-curvature in dimension $4$ is due to the Chern-Gauss-Bonnet formula for closed manifold $M$:
$$\chi(M)=\dstyle\frac{1}{4\pi^2} \int_{M}\left(\frac{|W|^2}{8}+Q\right) dv_{M},$$
where $W$ denotes the Weyl tensor. The main result proved in \cite{CQY1} is that suppose $M^4=(\mathbb{R}^4, e^{2w} |dx|^2)$ is a noncompact complete conformally flat manifold
with finite total Q-curvature $\int_{\mathbb{R}^4}|Q| e^{4w}dx<\infty$, and suppose the metric is normal(or suppose the scalar curvature $R_g$ is nonnegative at infinity), then
\begin{equation}\label{1.3}
\dstyle \frac{1}{4\pi^2}\int_{M^4} Q dv_{M}\leq  \chi(\mathbb{R}^4)=1,
\end{equation}
and \begin{equation}\label{1.4}
\displaystyle \chi(\mathbb{R}^4)-\frac{1}{4\pi^2}\int_{\mathbb{R}^4} Q dv_{M}= \sum_{j=1}^{k}\lim_{r\rightarrow
\infty }\frac{vol(\partial B_{j}(r))^{4/3}}{4(2\pi^2)^{1/3}vol(B_{j}(r))}.
\end{equation}
\noindent Here $B_{j}(r)$ means the Euclidean ball with radius $r$ at the $j$-th end.
The metric is defined to be normal if
\begin{equation}\label{w}
w(x)=\frac{1}{4\pi^2}\int_{\mathbb{R}^4}\log\frac{|y|}{|x-y|}Q(y)e^{4w(y)} dy + C.
\end{equation}
The normal condition is necessary in higher dimension, because bi-Laplacian operator $\Delta^2$ has quadratic functions in its kernel
which would give counterexample to (\ref{1.3}). The assumption of positive scalar curvature at infinity would imply that the metric is normal for $n=4$, thus for the purpose of this paper, it can replace the condition of normal metric. Another point is that the result in \cite{CQY1} is not limited to $n=4$. By the same argument as that in \cite{CQY1}, it is true for all even dimensions if the metric is assumed to be normal.
Later in \cite{CQY2}, Chang, Qing and Yang generalized the above result to locally conformally
flat manifolds with certain curvature conditions and found conformal compactification of such
manifold.

Back to the case of surface, as is indicated by formula (\ref{1.2}), such manifold has isoperimetric inequality for very large Euclidean balls. A natural question is whether it would have the same inequality for arbitrarily shaped domains, instead of only for very large Euclidean balls. The answer to this question is affirmative by the work of Li and Tam in \cite{L-T}. They use the idea of comparing the domain's boundary length with the geodesic distance and analyze the asymptotic behavior of it. However on manifolds with higher dimensions, it seems almost impossible to apply the same technique. In this paper, we circumvent this difficulty by looking at the construction of quasiconformal map to prove the isoperimetric inequality for higher dimensions with totally finite $Q$-curvature. The main result is the following:

\begin{theorem}\label{c1.2} Suppose $(M^4, g)=(\mathbb{R}^4, e^{2w}|dx|^2)$ is a noncompact complete
Riemannian manifold with normal metric and satisfies \begin{equation}
\dstyle \int_{M^4} |Q| dv_{M}< \infty,
\end{equation}
and
 \begin{equation}
\dstyle \alpha := \frac{1}{4\pi^2}\int_{M^4} Q dv_{M}< 1,
\end{equation}
then $(M,g)$ has isoperimetric inequality:
\begin{equation}\label{iso}
|\Omega|_{g}^{3/4}\leq C |\partial \Omega|_{g}
\end{equation}
for any smooth bounded domain $\Omega\subset M^4$. Here the constant $C $ depends on $M$. 
The more general Sobolev inequality is also valid with $dv_M=e^{4w}dx$,
\begin{equation}\label{sob}\dstyle \left( \int_{M^4}|f(x)|^{p^*}dv_M \right)^{1/{p^*}}\leq C \dstyle \left(\int_{M^4}
(|\nabla_g f(x)|_g)^p dv_M \right)^{1/p},
\end{equation}
where $1\leq p<4$, $p^*=\frac{4p}{4-p}$. In particular, when $p=1$, (\ref{sob}) implies (\ref{iso}).
\end{theorem}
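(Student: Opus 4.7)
The strategy is to reduce (\ref{iso}) and (\ref{sob}) to their Euclidean counterparts by constructing a quasiconformal self-homeomorphism $F:\mathbb{R}^4\to\mathbb{R}^4$ whose Jacobian $J_F$ is pointwise comparable to the conformal volume density $e^{4w}$. Granted such an $F$, the change of variables $y=F(x)$ converts the weighted norms in (\ref{iso})--(\ref{sob}) into honest Euclidean norms. Using $dv_g=e^{4w}dx$, $|\nabla_g f|_g=e^{-w}|\nabla f|$, and the quasiconformal bounds $|DF|^4\asymp J_F\asymp e^{4w}$, one checks
$$\int_{M^4}|f|^{p^*}\,dv_g=\int_{\mathbb{R}^4}|f\circ F^{-1}|^{p^*}\,dy,\qquad \int_{M^4}|\nabla_g f|_g^p\,dv_g\asymp\int_{\mathbb{R}^4}|\nabla(f\circ F^{-1})|^p\,dy.$$
Applying the Euclidean Sobolev inequality to $f\circ F^{-1}$ then yields (\ref{sob}); specializing to $p=1$ and invoking the coarea formula gives (\ref{iso}). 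So everything is reduced to producing $F$.

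\textbf{Construction of $F$ via the normal representation.} To build $F$ I would exploit the normality formula (\ref{w}). Hahn-decompose the signed measure $d\mu:=Q\,e^{4w}dx$ as $\mu=\mu^+-\mu^-$ and correspondingly split $w=w^+-w^-+C$, where $w^\pm(x)=\frac{1}{4\pi^2}\int\log\frac{|y|}{|x-y|}\,d\mu^\pm(y)$ is the logarithmic potential of a finite positive measure. The hypothesis $\int|Q|\,dv_M<\infty$ makes both $\|\mu^\pm\|$ finite, and the strict bound $\alpha<1$ is precisely the subcritical threshold at which Adams-- and Fontana-type exponential estimates for log potentials apply. From these estimates I expect to show, first, that $e^{4w}dx$ is a doubling measure satisfying a reverse H\"older inequality with respect to Euclidean balls, and, second, the more delicate \emph{strong $A_\infty$} property of David--Semmes: the quasi-distance $\delta(x,y):=\left(\int_{B_{x,y}}e^{4w}dz\right)^{1/4}$, with $B_{x,y}$ the smallest Euclidean ball containing $x$ and $y$, is bi-Lipschitz equivalent to a genuine metric on $\mathbb{R}^4$.

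\textbf{Main obstacle.} The principal difficulty is verifying this strong $A_\infty$ property, since generic $A_\infty$ weights can fail it; the strict inequality $\alpha<1$ enters essentially here, as it rules out the mass concentration at infinity that would break bi-Lipschitz comparability of $\delta$ with Euclidean distance at large scales. Once strong $A_\infty$ is in hand, a Semmes-type construction (adapted to the non-compact setting of $\mathbb{R}^4$) produces a quasisymmetric, and therefore quasiconformal, self-map $F$ of $\mathbb{R}^4$ with $J_F\asymp e^{4w}$. Feeding this $F$ into the change-of-variables argument of the first paragraph then closes the loop and delivers both (\ref{iso}) and (\ref{sob}), with the constant $C$ depending on the dilatation of $F$ and the reverse H\"older constant of $e^{4w}$, hence on $M$.
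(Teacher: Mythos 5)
Your first paragraph (reducing (\ref{iso}) and (\ref{sob}) to the Euclidean Sobolev inequality by a change of variables through a quasiconformal map $F$ with $J_F\asymp e^{4w}$, using $|DF|^4\asymp J_F$) is sound and is essentially how the paper passes from Theorem \ref{main} to Theorem \ref{c1.2}, so the whole burden is indeed the construction of $F$. It is there that your plan has a genuine gap: you propose to verify that $e^{4w}$ is a strong $A_\infty$ weight and then invoke a ``Semmes-type construction'' converting a strong $A_\infty$ weight into a quasiconformal self-map of $\mathbb{R}^4$ with comparable Jacobian. No such construction exists. The implication ``strong $A_\infty$ $\Rightarrow$ comparable to a quasiconformal Jacobian'' is false in general: only the converse (Gehring \cite{Gehring}) holds, and Semmes and Laakso \cite{La} produced strong $A_\infty$ weights/metrics on $\mathbb{R}^n$ that are not bi-Lipschitz equivalent to the Euclidean one; deciding which weights are comparable to quasiconformal Jacobians is precisely the open ``quasiconformal Jacobian problem.'' So the central step of your outline is the hard (and in general false) implication, not a citable tool. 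In addition, your intermediate claim that $\alpha<1$ together with Adams/Fontana-type exponential estimates yields the strong $A_\infty$ property is only asserted; in the paper strong $A_\infty$ (Corollary \ref{cA}) is a \emph{consequence} of the quasiconformal map, obtained via Gehring's theorem, not an input. Note also that if you could prove strong $A_\infty$ directly, you would not need $F$ at all, since David--Semmes \cite{D-S} already give the Sobolev inequality for strong $A_\infty$ weights; your route is therefore both circular in emphasis and unsupported at its key link.

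What the paper actually does, and what your outline is missing, is a mechanism for handling total mass that is large but subcritical ($\alpha<1$), beyond the smallness regime of Bonk--Heinonen--Saksman \cite{bonk}. One splits $\mu=\frac{1}{4\pi^2}Qe^{4w}dx$ into $\mu|_{B(0,R)}$, of mass $\beta$ close to $\alpha$, and a tail $\mu_\epsilon$ of total variation below the threshold $\epsilon_0(H,n)$ of Theorem \ref{bonk}. The compactly supported bulk is modeled by an explicit smooth cone-type quasiconformal map $\psi_\beta$ (a smoothing of $\varphi_\beta(x)=x/|x|^\beta$) with associated smooth measure $m_\beta$ of the same mass $\beta$; the condition $\alpha<1$ enters exactly here, since $\varphi_\beta$ is a homeomorphism with dilatation $\frac{1}{1-\beta}$ only for $\beta<1$ --- not, as you suggest, through mass concentration at infinity. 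The small tail is then treated by the BHS flow theorem applied with $g=\psi_\beta^{-1}$ (Propositions \ref{3.1'} and \ref{3.2'}), and the discrepancy $\mu|_{B(0,R)}-m_\beta$, a compactly supported smooth density with zero mean, hence in Hardy $H^1$, contributes only a bounded correction to $w$ by $H^1$--BMO duality against the logarithm. Without some substitute for this decomposition-and-composition scheme, your proposal does not produce the map $F$, and hence does not reach (\ref{iso}) or (\ref{sob}).
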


Actually we have proved a stronger result. Our Theorem \ref{c1.2} is a direct consequence of the following:
\begin{theorem}\label{main} Consider the manifold $(M^4, g)$ described in Theorem \ref{c1.2}. Suppose $w$ is the conformal factor taking the form of (\ref{w})
then there is a $H$-quasiconformal map $f:\mathbb{R}^4\rightarrow \mathbb{R}^4$ and constant $C$ such that
\begin{equation}
C^{-1} e^{4w}\leq J_{f}(x)\leq C e^{4w} \quad \mbox{a.e.}\quad  x\in \mathbb{R}^4.
\end{equation}
Here $H$ depends on $\alpha$ and $C$ depends on the manifold $M$. 
\end{theorem}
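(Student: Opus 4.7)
The plan is to realize $e^{4w}dx$, up to a bounded multiplicative factor, as the Jacobian of a single quasiconformal self-map of $\mathbb{R}^4$, taking advantage of the explicit log-potential representation (\ref{w}). Write $d\mu = Q e^{4w}dy$; by hypothesis $\mu$ is a finite signed Radon measure on $\mathbb{R}^4$ with total mass $\|\mu\| = 4\pi^2\alpha$, and the subcriticality $\alpha<1$ plays the role of the condition $\int K\,dv<2\pi$ used by Li and Tam in two dimensions.

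First I would split $w = w_+ - w_- + \text{const}$ according to the Jordan decomposition $\mu=\mu_+-\mu_-$, where each $w_\pm$ is the log-potential of a nonnegative finite measure. The negative part produces only an upward-bounded contribution that is controlled pointwise away from atoms of $\mu_-$; the serious estimates are needed for $w_+$. A Brezis--Merle / Adams type exponential integrability bound, which applies exactly because $\|\mu_+\|$ stays below the critical mass $4\pi^2$, yields a uniform local reverse H\"older inequality
\[
\Bigl(\frac{1}{|B|}\int_B e^{4pw}\,dx\Bigr)^{1/p} \leq C\,\frac{1}{|B|}\int_B e^{4w}\,dx
\]
on every Euclidean ball $B$, for some $p>1$ depending only on $\alpha$.

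Second, I would build $f$ by a Whitney-type decomposition of $\mathbb{R}^4$ into dyadic balls $\{B_i\}$ whose scale is adapted to $|\mu|$: shrink each $B_i$ until the $|\mu|$-mass of a fixed concentric enlargement is below a uniform small threshold. On each $B_i$ set a target scale factor $\lambda_i = \exp(\bar w_i)$, where $\bar w_i$ is the average of $w$ over $B_i$, and define $f$ locally as a smooth radial-like dilation by $\lambda_i$; the local pieces are then glued by a partition of unity subordinate to a controlled overlap of the $B_i$. The local Jacobian is comparable to $\lambda_i^4$, and the oscillation bound for log-potentials combined with the reverse H\"older inequality above upgrades this to the pointwise estimate $C^{-1}e^{4w(x)} \leq J_f(x) \leq C e^{4w(x)}$ almost everywhere.

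The crux, and the main obstacle, is the uniform control of the quasiconformal distortion $H$. The dilatation of $f$ on overlaps is driven by $\sup_{i\sim j}|\bar w_i - \bar w_j|$, which for a log-potential is bounded by the $|\mu|$-mass of a bounded number of enlarged neighboring balls. The Whitney refinement forces this local mass below a small threshold, and the fact that the total mass is $4\pi^2\alpha$ with $\alpha<1$ is exactly what allows such a refinement to terminate at a bounded number of scales on each region, so that $H$ depends only on $\alpha$. Once $f$ is in hand, Theorem \ref{c1.2} follows by pulling back the classical Euclidean Sobolev inequality through $f$ and using the Jacobian comparability to translate Euclidean volumes and perimeters into the $g$-volume and $g$-perimeter on $M^4$.
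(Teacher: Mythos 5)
Your first step rests on a false premise: the hypotheses give $\int_{M^4}|Q|\,dv_M<\infty$ and $\frac{1}{4\pi^2}\int_{M^4}Q\,dv_M=\alpha<1$, but put \emph{no} bound on the positive part alone. Writing $\mu=\frac{1}{4\pi^2}Qe^{4w}dx=\mu_+-\mu_-$, the mass $\|\mu_+\|$ can be arbitrarily large (it is only the signed total that is subcritical), so the Brezis--Merle/Adams exponential integrability ``because $\|\mu_+\|$ stays below the critical mass $4\pi^2$'' is not available, and your uniform reverse H\"older inequality with exponent depending only on $\alpha$ does not follow. Your dismissal of the negative part is also too quick: where $\mu_-$ concentrates, $w\to-\infty$ and $e^{4w}$ degenerates toward $0$, which obstructs the two-sided Jacobian comparison just as much as blow-up of $\mu_+$; the paper treats both signs at once by feeding the (small) signed tail into the Bonk--Heinonen--Saksman theorem.

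The deeper gap is the construction itself. Gluing local dilations $\lambda_i=e^{\bar w_i}$ on Whitney balls by a partition of unity does not, as stated, produce a homeomorphism, let alone a quasiconformal map whose Jacobian is pointwise comparable to $e^{4w}$; producing a quasiconformal map with prescribed Jacobian from local averages plus oscillation control is exactly the quasiconformal Jacobian problem, and it fails in general --- Laakso's example (cited in the paper) is a strong $A_\infty$ weight not comparable to any quasiconformal Jacobian, so no amount of local volume and oscillation estimates alone can close this step. Your termination argument also conflates $\alpha$ (the signed mass) with the total variation $\|\mu\|$, which is controlled only by $M$, so the claimed bound $H=H(\alpha)$ is unsupported; note too that $e^{4w}\sim|x|^{-16\alpha}$ at infinity forces any such $f$ to behave like the cone map, with dilatation of order $\frac{1}{1-\alpha}$, which a scheme aiming at uniformly small local mismatch cannot reproduce. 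The paper's route supplies precisely the missing existence mechanism: it splits $\mu$ into a bulk piece of mass $\beta$ (possibly close to $1$) inside $B(0,R)$ and a tail $\mu_\epsilon$ of total variation below $\epsilon_0(H,n)$, replaces the bulk by the smooth model measure $m_\beta$ whose potential is, up to constants, $\frac1n\log J_{\psi_\beta}$ for the explicit smoothed cone map $\psi_\beta$ (with $H(\psi_\beta)=\frac{1}{1-\beta}$), applies the Bonk--Heinonen--Saksman flow theorem to $\mu_\epsilon$ composed with $\psi_\beta^{-1}$, passes to the limit by weak compactness of normalized uniformly quasiconformal families, and finally absorbs the discrepancy $\mu|_{B(0,R)}-m_\beta$ via $H^1$--BMO duality (a smooth, compactly supported, mean-zero density paired with $\log$), which is where the constant $C=C(M)$ enters. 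Without some substitute for these ingredients --- in particular an actual existence theorem for the quasiconformal map --- the proposal does not prove the statement.
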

\begin{remark}It is well known that if $\frac{1}{4\pi^2}\int_{M^4} Q dv_{M}=1$, there is counterexample. In fact, one can
think of a manifold $M$ with only one end that looks like a cylinder, then it is not hard to see $\frac{1}{4\pi^2}\int_{M^4} Q dv_{M}=1$. Such a manifold $M$ doesn't have isoperimetric inequality, not to mention the q.c. map. with property (\ref{q}), because the latter would imply the former. 
\end{remark}
\begin{remark} Another thing is that even though the model case of the manifold described in Theorem \ref{c1.2} is always the cone, and we will use it several times as our example. It should be clear that $Qe^{4w}$ could be very close to the distribution:
$$\displaystyle  \sum_{k=2}^{\infty}\frac{1}{k^2} \delta_{k},$$
where $\delta_{k}$ means Dirac distribution supported at point $x=(k,0,...,0)$.
Therefore, we can't expect the manifold to be close to a cone outside a compact domain. Nor can we control the Ricci curvature from above and below in order to get isoperimetric inequality. Finally it is reasonable that the constant $C$ in Theorem \ref{main} depends on the manifold. We will explain it in Remark \ref{Remark 3.1}.
\end{remark}

There are interesting results in the direction of finding quasiconformal
 map such that its Jacobian is comparable to the volume form weight $e^{nw}$, i.e.
\begin{equation}\label{q}
C^{-1} e^{nw(x)}\leq J_{f}(x)\leq C e^{nw(x)},
\end{equation}
for almost every $x\in \mathbb{R}^n$. Here ``almost every" is because quasiconformal map's Jacobian
may not be defined every where. A quite related problem is whether there exists a
bi-Lipschitz map between $(M,g)$=$(\mathbb{R}^n, e^{2w}|dx|^2)$ and the Euclidean space $(\mathbb{R}^n, |dx|^2)$, when
assuming the total Q-curvature is finite. It is equivalent to say whether there exists a homeomorphism
$f:\mathbb{R}^n\rightarrow \mathbb{R}^n$, such that
\begin{equation}
C^{-1}d_g(x,y)\leq |f(x)-f(y)|\leq C d_g(x,y).
\end{equation}
It turns out that a Riemannian manifold is bi-Lipschitz equivalent to Euclidean space $(\mathbb{R}^n, g_0=|dx|^2)$
if and only if $e^{nw}$ is comparable to the Jacobian of some quasiconformal map. We now list some of the results about bi-Lipschitz parametrization problem in the following: when $n=2$ Fu \cite{Fu} proved that if Gaussian curvature is totally finite and its absolute integral is very small, then there exists a bi-Lipschitz map from $M$ to $\mathbb{R}^2$. Later Bonk and Lang \cite{bonk2} proved that if Gaussian curvature is totally finite and $\frac{1}{2\pi}\int_{M}|K| dv_{M}< 1$, then there exists
a bi-Lipschitz map from $M$ to $\mathbb{R}^2$. For even dimensions $n\geq 4$, the problem was solved for small total $Q$-curvature by Bonk, Heinonen
and Saksman in \cite{bonk}. They proved there exists a q.c. map satisfying (\ref{q}) if
$$\int_{M^4} |Q|dv_{M}\leq \epsilon,$$
for some small $\epsilon$ depending on dimension. Actually their result is true for all even dimensions $n$.
In Theorem \ref{main}, I am going to answer the isoperimetric inequality question by discussing Bonk,
Heinonen and Saksman's result about q.c. maps and showing the existence of q.c. map without ``small"
condition on total curvature.

As mentioned before the quasiconformal map appeared in Theorem \ref{main} is also the bi-Lipschitz map between the two metric spaces $(M^4,g)$
and $(\mathbb{R}^4,|dx|^2)$, as a corollary of Theorem \ref{main}, we find a bi-Lipschitz parametrization of $M^4$.
\begin{theorem}\label{bilip}
Consider the manifold $(M^4, g)$ with the same condition as in Theorem \ref{main}, then $(M^4, g)$
is bi-Lipschitz equivalent to $\mathbb{R}^n$, i.e there exists a bi-Lipschitz map $f:\mathbb{R}^4\rightarrow \mathbb{R}^4$, i.e
\begin{equation}
C^{-1}d_g(x,y)\leq |f(x)-f(y)|\leq C d_g(x,y).
\end{equation}
\end{theorem}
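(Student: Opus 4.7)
The plan is to deduce Theorem \ref{bilip} directly from Theorem \ref{main} by converting the Jacobian comparison into pointwise control of the differential $Df$, and then integrating along curves. Since the existence and quasiconformality of $f$ have already been established, the argument is essentially mechanical.

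Step 1 --- Pointwise differential bounds. For an $H$-quasiconformal map $f:\mathbb{R}^4\to\mathbb{R}^4$, the inner and outer distortion inequalities give at almost every $x$
\[
\ell(Df(x))^4 \;\geq\; H^{-1} J_f(x), \qquad |Df(x)|^4 \;\leq\; H\, J_f(x),
\]
where $\ell(Df(x))=\min_{|v|=1}|Df(x)v|$. Inserting the Jacobian comparison $C^{-1}e^{4w}\leq J_f\leq Ce^{4w}$ from Theorem \ref{main} yields constants $c_1,c_2>0$ with
\[
c_1\, e^{w(x)} \;\leq\; \ell(Df(x))\;\leq\;|Df(x)|\;\leq\; c_2\, e^{w(x)} \quad \text{for a.e. } x\in\mathbb{R}^4.
\]

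Step 2 --- Upper Lipschitz bound. Quasiconformal homeomorphisms are ACL and differentiable a.e. For any $x,y\in\mathbb{R}^4$ and any piecewise smooth curve $\gamma$ joining them along which $f$ is absolutely continuous,
\[
|f(x)-f(y)| \;\leq\; \int_\gamma |Df|\, ds \;\leq\; c_2 \int_\gamma e^{w}\, ds \;=\; c_2\, \mathrm{length}_g(\gamma).
\]
Infimizing over such curves (which form a dense subfamily by Fuglede's lemma, and which compute $d_g$ since $g$ is a continuous conformal factor on $\mathbb{R}^4$) gives $|f(x)-f(y)|\leq c_2\, d_g(x,y)$.

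Step 3 --- Lower Lipschitz bound. The inverse $f^{-1}$ is again $H$-quasiconformal, with Jacobian $J_{f^{-1}}(y)=1/J_f(f^{-1}(y))$ controlled by $c^{-1}e^{-4w(f^{-1}(y))}\leq J_{f^{-1}}(y)\leq C\,e^{-4w(f^{-1}(y))}$ a.e. Step 1 applied to $f^{-1}$ therefore yields $|Df^{-1}(y)|\leq c_3\, e^{-w(f^{-1}(y))}$ almost everywhere. For any rectifiable Euclidean curve $\eta$ from $f(x)$ to $f(y)$ along which $f^{-1}$ is absolutely continuous, pulling back and measuring with $g$ gives
\[
\mathrm{length}_g(f^{-1}\!\circ\eta) \;=\; \int e^{w(f^{-1}(\eta))}\,|Df^{-1}(\eta)|\,|\eta'|\,dt \;\leq\; c_3 \int |\eta'|\,dt,
\]
so $d_g(x,y)\leq c_3\,|f(x)-f(y)|$ after taking the infimum over $\eta$ (e.g.\ the Euclidean segment).

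The only technical point requiring attention is the passage from a.e.\ pointwise bounds on $|Df|$ and $|Df^{-1}|$ to the length integrals, which is where the ACL property of quasiconformal maps and Fuglede's lemma (applied to the path families connecting $x$ to $y$ and $f(x)$ to $f(y)$) are used. Since these are standard tools in quasiconformal analysis, the real content of Theorem \ref{bilip} is already packaged inside Theorem \ref{main}; the bi-Lipschitz estimate is a consequence of the Jacobian comparison together with quasiconformality.
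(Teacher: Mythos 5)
Your argument is correct, and the upper bound (Step 2) is exactly the paper's: integrate $|Df|\le C J_f^{1/4}\simeq e^{w}$ along curves joining $x$ and $y$. Where you genuinely diverge is the lower bound. The paper does not touch $f^{-1}$ at all: it introduces the measure quasidistance $\delta(x,y)=\bigl(\int_{B_{x,y}}e^{4w}\bigr)^{1/4}$, notes via the change-of-variables formula and a standard quasiconformal distortion theorem that $\delta(x,y)\simeq|f(x)-f(y)|$, and then invokes Gehring's theorem (the Jacobian of a q.c.\ map is a strong $A_\infty$ weight, hence so is $e^{4w}$ by Theorem 1.2) together with the $A_\infty$ inequality $d_g(x,y)\le C\,\delta(x,y)$ to conclude $d_g(x,y)\le C|f(x)-f(y)|$. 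You instead apply the same differential estimate to $f^{-1}$, using $J_{f^{-1}}(y)=1/J_f(f^{-1}(y))$ a.e.\ and the distortion inequality $|Df^{-1}|^4\le H\,J_{f^{-1}}$ to get $e^{w\circ f^{-1}}|Df^{-1}|\le c_3$ a.e., and then pull back near-segments. This is more elementary and self-contained: it avoids the strong $A_\infty$/measure-distance machinery of Section 4 entirely. The price is that the a.e.\ pointwise bound must be upgraded to a statement along curves for $f^{-1}$ as well as for $f$ (ACL plus Fuglede, or averaging over parallel segments and letting the endpoints converge, using continuity of $f^{-1}$ and local comparability of $d_g$ with the Euclidean distance); you flag this correctly, and the same caveat is implicit in the paper's own display (5.1). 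The paper's route, by contrast, outsources exactly this kind of technicality to Gehring's theorem and the David--Semmes framework it has already set up, at the cost of quoting a distortion theorem to identify $\delta(x,y)$ with $|f(x)-f(y)|$. Both proofs are sound and yield the same dependence of the bi-Lipschitz constant on $M^4$.
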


\begin{remark}\label{r1}All the above results are not restricted to the $n=4$ case. Everything
could be changed to all even dimension $n$ as long as we assume $(M^{n}, g)=(\mathbb{R}^n, e^{2w}|dx|^2)$,
where
$$w(x)=\frac{1}{c_n}\int_{\mathbb{R}^n}\log\frac{|y|}{|x-y|}P(y)dx+C
$$
for some continuous function $P(y)$ that belongs to $L^1(\mathbb{R}^n)$ and satisfies $\int_{\mathbb{R}^n}P(x)dx<c_n$. Here the dimensional
constant $c_n$ is the constant that appears in the fundamental solution equation $\Delta^{n/2}\log\frac{1}{|x|}=c_n\delta_0(x).$
\end{remark}
In harmonic analysis, the $A_p$ weight ($p\geq 1$) is an important notion which describes
when a function $u$ could be a weight such that the associated measure $u(x)dx$ has the property that
maximal function $\textrm{M}$ of a $L^1$ function is weak $L^1$ if $p=1$, and that maximal
function of $L^p$ function is $L^p$ function if $p>1$. For definitions of $A_1$, $A_p$ weight in detail, see section 4.\\

A weight $u$ is defined to be $A_\infty$
if $u\in A_p$ for some $p\geq 1$. In order to find criteria for a function to be the Jacobian of some q.c. map, David and
Semme in \cite{D-S} defined the notion of strong $A_\infty$. One of the relations
between standard $A_p$ weight and strong $A_\infty$ weight
is that an $A_1$ weight is always strong $A_\infty$. Also it is well known that if a function
is comparable to the Jacobian of a q.c. map, then it is a strong $A_{\infty}$ weight by the argument
of Gehring. Therefore

\begin{corollary}\label{cA}Under the assumption on manifold in Remark \ref{r1}, $e^{nw}$ is a strong $A_{\infty}$
weight.
\end{corollary}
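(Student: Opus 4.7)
The plan is to obtain Corollary \ref{cA} as an essentially immediate consequence of Theorem \ref{main} together with standard harmonic-analytic facts about Jacobians of quasiconformal maps; the argument reduces to invoking the correct results in the correct order.

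First, I appeal to the even-dimensional extension of Theorem \ref{main} announced in Remark \ref{r1}: under the stated hypotheses on $w$, there exist an $H$-quasiconformal map $f:\mathbb{R}^n\to\mathbb{R}^n$ and a constant $C>0$ with
\[
C^{-1}e^{nw(x)}\leq J_f(x)\leq C\,e^{nw(x)}\qquad\text{for a.e.}\ x\in\mathbb{R}^n.
\]
Second, I apply Gehring's reverse Hölder lemma: the Jacobian of any quasiconformal self-map of $\mathbb{R}^n$ satisfies a reverse Hölder inequality on all balls, so $J_f\in A_\infty$. To upgrade this to \emph{strong} $A_\infty$ in the sense of David--Semmes, I use that the quasisymmetry of $f$ (automatic in $\mathbb{R}^n$ for quasiconformal maps when $n\geq 2$) gives a bi-Hölder comparison between the Euclidean distance and the intrinsic quasi-distance $d_{J_f}(x,y):=\bigl(\int_{B(x,|x-y|)}J_f\,dz\bigr)^{1/n}$. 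This comparability of quasi-metrics is precisely the defining condition of a strong $A_\infty$ weight, so $J_f$ is strong $A_\infty$.

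Finally, I note that strong $A_\infty$ is preserved under pointwise comparability of weights: if $u_1\approx u_2$ a.e.\ with two-sided bounds, then the associated averages on any ball are comparable, hence so are the induced quasi-metrics $d_{u_1}$ and $d_{u_2}$, and therefore $u_1$ is strong $A_\infty$ iff $u_2$ is. Taking $u_1=e^{nw}$ and $u_2=J_f$ yields the corollary. There is no real technical obstacle at this stage, since all of the geometric content is already packaged in Theorem \ref{main}; the only thing one must be a little careful about is verifying that the David--Semmes definition of strong $A_\infty$ is indeed comparability-invariant, which follows directly from its formulation in terms of ball-averages.
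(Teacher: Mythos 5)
Your overall skeleton is the paper's: invoke the even-dimensional form of Theorem \ref{main} via Remark \ref{r1} to get a quasiconformal $f$ with $J_f\simeq e^{nw}$ a.e., note that the strong $A_\infty$ property only depends on the weight up to two-sided pointwise comparability, and conclude. Those parts are fine. The genuine gap is in your upgrade from $A_\infty$ to \emph{strong} $A_\infty$. The David--Semmes defining condition is not a (bi-H\"older) comparison between the Euclidean distance and the measure quasi-distance $\delta(x,y)=\bigl(\int_{B_{x,y}}\omega\,dz\bigr)^{1/n}$; it is the two-sided comparison of $\delta$ with the \emph{weighted geodesic} distance $d_\omega(x,y)=\inf_\gamma\int_\gamma\omega^{1/n}|ds|$, equivalently $\omega(B_{x,y})^{1/n}\leq C\,(\omega\mbox{-length of }\gamma)$ for \emph{every} rectifiable curve $\gamma$ joining $x$ and $y$, as in (\ref{strong A'}). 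A local two-sided H\"older comparison of $\delta$ with $|x-y|$ is neither the definition nor equivalent to it: the weight $|x_1|^{\alpha}$, $\alpha>0$, recalled in Section 4, satisfies such bounds on compact sets and yet is not strong $A_\infty$, precisely because curves lying in the hyperplane $\{x_1=0\}$ have too small weighted length. Quasisymmetry alone cannot close this gap: it controls $\delta$ (via $\int_{B_{x,y}}J_f\,dz\simeq|f(x)-f(y)|^n$, by distortion and change of variables) but says nothing about the $\omega$-length of an arbitrary curve, which is exactly what the "strong" condition tests.

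The repair is either to quote Gehring's theorem as a black box --- a quasiconformal Jacobian is always a strong $A_\infty$ weight, which is how the paper itself deduces Corollary \ref{cA} from Theorem \ref{main} --- or to supply the missing curve estimate: for any rectifiable $\gamma$ from $x$ to $y$ one has $|f(x)-f(y)|\leq C\int_\gamma J_f^{1/n}\,ds$, using $|Df|^n\simeq J_f$ a.e. together with the absolute-continuity properties of quasiconformal maps, exactly as in the chain of inequalities (\ref{5.1}); combined with $\int_{B_{x,y}}J_f\,dz\simeq|f(x)-f(y)|^n$ this yields $\delta(x,y)\leq C\,d_{J_f}(x,y)$, while the reverse inequality $d_{J_f}(x,y)\leq C\,\delta(x,y)$ follows from the reverse H\"older inequality you already invoked. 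With that step restored, your argument coincides with the paper's.
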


Moreover, we are going to show that if Q-curvature is nonnegative(resp. P(x) is nonnegative in higher
dimension), there is a better result: $e^{nw}$ is not only a strong $A_\infty$ weight, but also an $A_1$ weight.
\begin{theorem}\label{A1}
If $(M^4,g)$ (resp. $(M^n,g)$) is the manifold described in Theorem \ref{main} (resp. Remark
\ref{r1}), also assume Q-curvature is nonnegative (resp. P(x) is nonnegative in higher dimension),
then $e^{4w}$ (resp. $e^{nw}$) is an $A_{1}$ weight function.
\end{theorem}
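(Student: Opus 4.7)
The goal is to verify the $A_{1}$ condition directly: for every Euclidean ball $B=B(x_{0},r)\subset\mathbb{R}^{n}$,
\begin{equation*}
\frac{1}{|B|}\int_{B}e^{nw(x)}\,dx \;\leq\; C\,\operatorname{essinf}_{x\in B}e^{nw(x)},
\end{equation*}
with $C$ depending only on $n$ and $\alpha$. Denote by $P\geq 0$ the nonnegative density from Remark \ref{r1} (for $n=4$ this is $P=Qe^{4w}$). By the definition of $A_{1}$, it is enough to show that $\frac{1}{|B|}\int_{B}e^{n(w(x)-w(z))}\,dx$ is bounded by a constant independent of $B$ for every Lebesgue point $z \in B$.

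Using the integral representation of $w$,
\begin{equation*}
w(x)-w(z)=\frac{1}{c_{n}}\int_{\mathbb{R}^{n}}\log\frac{|z-y|}{|x-y|}\,P(y)\,dy.
\end{equation*}
I split the domain at radius $5r$: set $N=\{y:|y-x_{0}|\leq 5r\}$, $F=\mathbb{R}^{n}\setminus N$, and accordingly write $w(x)-w(z)=I_{\mathrm{near}}(x)+I_{\mathrm{far}}(x)$. For $y\in F$ and $x,z\in B$, the elementary inequalities $|x-y|,|z-y|\in[\tfrac{4}{5}|y-x_{0}|,\tfrac{6}{5}|y-x_{0}|]$ give $|\log(|z-y|/|x-y|)|\leq\log(3/2)$, so $|I_{\mathrm{far}}(x)|\leq\alpha\log(3/2)$; this contributes only a universal multiplicative factor $(3/2)^{n\alpha}$.

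For the near piece, put $a:=\frac{1}{c_{n}}\int_{N}P(y)\,dy\leq\alpha<1$. When $a>0$, normalize $\mathbf{1}_{N}P\,dy/(ac_{n})$ into a probability measure $\tilde{\mu}$, so that
\begin{equation*}
I_{\mathrm{near}}(x)=a\int\log\frac{|z-y|}{|x-y|}\,d\tilde{\mu}(y).
\end{equation*}
Jensen's inequality applied to the convex function $e^{t}$ gives the key pointwise estimate
\begin{equation*}
\exp(nI_{\mathrm{near}}(x))\;\leq\;\int\left(\frac{|z-y|}{|x-y|}\right)^{na}d\tilde{\mu}(y).
\end{equation*}
Integrating over $x\in B$, exchanging the order of integration by Fubini, and using $|z-y|\leq 6r$ together with the integrability bound $\int_{B}|x-y|^{-na}\,dx\leq C_{n}(6r)^{n-na}/(n-na)$, I obtain $\frac{1}{|B|}\int_{B}\exp(nI_{\mathrm{near}})\,dx\leq C_{n}/(1-\alpha)$. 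Combining this with the uniform bound on $I_{\mathrm{far}}$ produces the desired $A_{1}$ constant.

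The main obstacle is the Jensen step. It relies on two points: the sign condition $P\geq 0$, needed to normalize $\mathbf{1}_{N}P\,dy$ to a probability measure (without which the inequality would go the wrong way); and the subcritical integrability $na<n$, i.e.\ $a<1$, which follows from $a\leq\alpha<1$. Precisely this fails as $\alpha\to 1$, so the resulting constant is of order $(1-\alpha)^{-1}$ and blows up at the critical threshold, consistent with the remark following Theorem \ref{c1.2}.
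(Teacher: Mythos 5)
Your proof is correct and rests on the same mechanism as the paper's: apply Jensen's inequality to the probability measure obtained by normalizing the nonnegative density $Qe^{nw}$ (resp.\ $P$), which turns $e^{n(w(x)-w(z))}$ into an average of kernel powers $\left(\frac{|z-y|}{|x-y|}\right)^{na}$ with subcritical exponent $na\leq n\alpha<n$. The only difference is bookkeeping: the paper bounds the resulting ball average by invoking that $|x|^{-n\alpha}$ is an $A_1$ weight (uniformly under translations in $z$), whereas you prove the equivalent estimate by hand via the near/far splitting at radius $5r$; both yield an $A_1$ constant depending only on $n$ and $\alpha$ and blowing up as $\alpha\to 1$.
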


Let me summarize the structure of this paper before going into any detail. In section 2 some basic properties of quasi-conformal map are reviewed, and Bonk, Heinonen and Saksman's result \cite{bonk} on quasi-conformal
map by using quasi-conformal flow is presented. In section 3 we are going
to prove several propositions and lemmas which finally lead to the proof of Theorem \ref{main}. This is the main
part of the paper. In section 4 we are going to give a much shorter and very different proof of isoperimetric inequality in the special case $Q\geq 0$, which relates to $A_p$ weights. Definitions and preliminaries will be included there. Finally in section 5, we will prove the bi-Lipschitz parametrization result by using Theorem \ref{main}.\\

\noindent \textbf{Acknowledgments:} The author would like to thank her advisor Professor Alice Chang for suggesting this interesting problem and is grateful to both Alice Chang and Paul Yang for their constant help, support and many stimulating discussions.

\section{Quasiconformal map}
In this section, we are going to present some preliminary facts about quasi-conformal map. An orientation preserving homeomorphism $f:\mathbb{R}^n\rightarrow \mathbb{R}^n $ is called
quasiconformal (for simplicity, we denoted it as q.c. map) if
$$\sup_{x\in \mathbb{R}^n} H(x,f)<\infty,$$
where
$$H(x,f)=\limsup_{r\rightarrow 0+ }\sup_{|u-x|=|v-x|=r}\frac{|f(u)-f(x)|}{|f(v)-f(x)|}.
$$
Denote $H(f)=\sup_{x\in \mathbb{R}^n}H(x,f)<\infty$ the dilatation of a q.c. map. We call $f$ is $H$-
quasiconformal if $H(f)\leq H$. Note that the inverse of a q.c. map is still q.c., and
the composition of two q.c. maps is also q.c. Their dilatation relates in the following way:
$$H(f^{-1})=H(f),
$$
and
$$H(f_1\circ f_2)\leq H(f_1)\cdot H(f_2).$$
A family $\Gamma$ of q.c. maps is called uniformly $H$-quasiconformal if $\exists \quad H\geq 1$,
 s.t. $H(f)\leq H$, $\forall f\in \Gamma$. The Jacobian of a q.c. map $J_{f}(x)=det(Df(x))$ is well
defined almost everywhere. $J_{f}(x)>0$ because $f$ is orientation preserving map.
The analytic definition of q.c. map is that $Df\in L_{loc}^{n}(\mathbb{R}^n)$ and
$C^{-1}|Df(x)|^n\leq  J_{f}(x)\leq C |Df(x)|^n$ almost everywhere. For equivalence of these two definitions
we refer to Vaisala's monograph \cite{Va}. From this analytic definition, we see
that the Jacobian of q.c. map is always locally integrable, because the q.c. map has the change of variable formula: let $f:\mathbb{R}^n\rightarrow \mathbb{R}^n$ is q.c., then
\begin{equation}
\int_{\mathbb{R}^n}u\circ f(x)J_f(x) dx=\int_{\mathbb{R}^n}u(y)dy, \quad    \mbox{$\forall$ integrable function $u$ on $\mathbb{R}^n$}.
\end{equation}
Now we come to look at a simple example
of q.c. map and its Jacobian:
\begin{example}\label{cone}
$\varphi_\beta(x)=\frac{x}{|x|^\beta}$ is a q.c. map from $\mathbb{R}^n\rightarrow \mathbb{R}^n$ when $\beta<1$. It has
dilatation $H(\varphi_\beta)=\frac{1}{1-\beta}$. And its Jacobian is
\begin{equation}
J_{\varphi_\beta}(x)=\frac{(1-\beta)}{|x|^{n\beta}}
\end{equation}
\end{example}

Note in the example, $\beta$ could be negative. The reason that $\beta<1$ is natural. In fact if $\beta=1$, $\varphi_1=\frac{x}{|x|}$ is not a
homeomorphism from $\mathbb{R}^n\rightarrow \mathbb{R}^n$. For $\beta >1$, $J_{\varphi_\beta}(x)=\frac{1-\beta}{|x|^{n\beta}}$
 is not integrable on a neighborhood of $0$, which violates the analytic definition of q.c. map.\\
The next beautiful result is by Gehring, which relates the Jacobian of q.c. map to the weight functions.
\begin{theorem}(Gehring)\cite{Gehring}The quasiconformal Jacobian is always a strong $A_\infty$ weight.
\end{theorem}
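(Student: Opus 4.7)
The plan is to use the David--Semmes characterization of strong $A_\infty$: a weight $u$ is strong $A_\infty$ precisely when the quasi-distance
$$\delta_u(x,y) := u(B_{xy})^{1/n},$$
with $B_{xy}$ the smallest Euclidean ball containing $\{x,y\}$, is comparable to a genuine metric on $\mathbb{R}^n$. Taking $u=J_f$, the change-of-variables formula $\int_E J_f\,dx = |f(E)|$ yields
$$\delta_{J_f}(x,y) = |f(B_{xy})|^{1/n},$$
so the task reduces to showing that this quantity is comparable, with constants depending only on the dilatation $H$, to $|f(x)-f(y)|$; since $f$ is a homeomorphism, $D(x,y):=|f(x)-f(y)|$ is automatically a metric (the pullback of the Euclidean metric through $f$).

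First I would invoke the classical fact (Vaisala \cite{Va}) that an $H$-quasiconformal self-map of $\mathbb{R}^n$ is $\eta$-quasisymmetric with $\eta$ depending only on $H$ and $n$: whenever $|a-x_0|\leq t|b-x_0|$, one has $|f(a)-f(x_0)|\leq \eta(t)\,|f(b)-f(x_0)|$. From quasisymmetry I would derive the standard ``egg-yolk'' roundness of $f$-images of balls: for any $B=B(x_0,r)$, the set $f(B)$ is sandwiched between two concentric Euclidean balls of radii comparable to $\mathrm{diam}(f(B))$, with ratio depending only on $H$ and $n$. In particular $|f(B)|^{1/n} \asymp \mathrm{diam}(f(B))$.

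Applied to $B = B_{xy}$, this yields $|f(B_{xy})|^{1/n} \asymp \mathrm{diam}(f(B_{xy}))$. Next, since $x$ and $y$ lie in $B_{xy}$ with $|x-y|$ comparable to the radius of $B_{xy}$, quasisymmetry forces $\mathrm{diam}(f(B_{xy})) \asymp |f(x)-f(y)|$: the upper estimate because every point of $B_{xy}$ sits within bounded relative distance of $x$ (and so its image within bounded relative distance of $f(x)$ compared with $f(y)$), and the lower estimate trivially because $f(x), f(y) \in f(B_{xy})$. Chaining the comparisons gives $\delta_{J_f}(x,y) \asymp |f(x)-f(y)| = D(x,y)$, establishing that $J_f\,dx$ is a strong $A_\infty$ weight with constants depending only on $H$.

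The main obstacle is assembling the two classical geometric facts at the correct quantitative level: quasiconformal $\Rightarrow$ quasisymmetric (a genuinely nontrivial theorem, requiring the modulus-of-curve-family machinery), and the egg-yolk roundness, which is then essentially a corollary of quasisymmetry. Once these are in place the argument is a short chain of volume-to-diameter comparisons. A minor technical point is that $J_f$ is only defined almost everywhere, but this is harmless because the relevant object is the Borel measure $E \mapsto |f(E)|$, to which $J_f\,dx$ is equal by the change-of-variables formula; all $A_\infty$-type statements are properly formulated for this measure.
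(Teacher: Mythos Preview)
Your argument is correct, and the core geometric estimate you use---that $\delta_{J_f}(x,y)=|f(B_{xy})|^{1/n}\asymp |f(x)-f(y)|$ via quasisymmetry and the egg-yolk lemma---is exactly the distortion estimate $\omega(B_{x,y})\simeq |f(x)-f(y)|^n$ that the paper's sketch in Section~4 records as the first step of Gehring's proof.

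Where you diverge is in how you finish. The paper follows Gehring's original route: it verifies the two inequalities in the definition of strong $A_\infty$ separately, using the analytic q.c.\ condition $|Df|^n\leq C J_f$ to bound $|f(x)-f(y)|$ by the $\omega$-length of any curve joining $x$ to $y$ (giving $\delta\leq C d_\omega$), and invoking Gehring's reverse H\"older inequality to get the ordinary $A_\infty$ condition (giving $d_\omega\leq C\delta$). You instead appeal to the David--Semmes characterization that a weight is strong $A_\infty$ iff $\delta_u$ is comparable to some genuine metric, and observe that $|f(x)-f(y)|$ is manifestly a metric. This is cleaner and avoids both the curve-integral computation and the reverse H\"older machinery, at the cost of importing a nontrivial equivalence (the direction ``$\delta_u$ comparable to a metric $\Rightarrow$ $u$ is $A_\infty$'' is itself a theorem of Semmes, not a triviality). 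The paper's route is more self-contained relative to Gehring's original work; yours is more structural and makes clearer why the result has nothing specifically to do with differentiation.
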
We will present the definition of $A_p$ weight and its properties in section 4. As a consequence of the fact that a strong $A_\infty$ weight(actually $A_{\infty}$ is enough) has inverse Holder inequality, one has:
\begin{lemma}\cite[Chap. V]{stein}Let $f:\mathbb{R}^n\rightarrow \mathbb{R}^n$ is an $H$ quasi-conformal. Then there exist constants
$\alpha=\alpha(n,H)>0$ and $C=C(n,H)>0$ such that for every ball
$B\subset \mathbb{R}^n$,
\begin{equation}
\frac{1}{|B|}\int_{B}J_f(x)^{-\alpha}  dx\leq C(\frac{|B|}{|f(B)|})^\alpha.
\end{equation}
\end{lemma}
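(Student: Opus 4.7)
The plan is to reduce the inequality to a reverse Hölder estimate for the Jacobian of $f^{-1}$, which is also $H$-quasiconformal, and then use the quasi-symmetric property of $H$-q.c.\ maps to replace $f(B)$ by an honest ball. The starting move is the change of variables $y=f(x)$, which gives $dx=J_{f^{-1}}(y)\,dy$ and $J_f(x)^{-1}=J_{f^{-1}}(y)$, so
\[
\int_B J_f(x)^{-\alpha}\,dx \;=\; \int_{f(B)} J_{f^{-1}}(y)^{1+\alpha}\,dy.
\]
This identity turns a ``negative-power'' integral on $B$ into a ``positive-power'' integral on $f(B)$, which is the correct setting for applying Gehring's theorem.

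Next I would invoke Gehring's theorem on $f^{-1}$ (which is $H$-quasiconformal by the properties recalled in Section~2). By Gehring's self-improvement, $J_{f^{-1}}$ satisfies a reverse Hölder inequality on all balls: there exist $\alpha=\alpha(n,H)>0$ and $C=C(n,H)$ with
\[
\frac{1}{|B'|}\int_{B'} J_{f^{-1}}^{1+\alpha}\,dy \;\leq\; C\left(\frac{1}{|B'|}\int_{B'}J_{f^{-1}}\,dy\right)^{1+\alpha}
\;=\;C\left(\frac{|f^{-1}(B')|}{|B'|}\right)^{1+\alpha}
\]
for every Euclidean ball $B'$, using the change-of-variables formula to identify $\int_{B'}J_{f^{-1}}=|f^{-1}(B')|$.

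The technical obstacle is that $f(B)$ is not a ball, so Gehring's inequality cannot be applied directly on $f(B)$. I would get around this by choosing $B^{*}=B(f(x_0),r^{*})$ with $x_0$ the center of $B$ and $r^{*}=\sup_{x\in\partial B}|f(x)-f(x_0)|$, which is the smallest concentric ball containing $f(B)$. The quasi-symmetric property of $H$-q.c.\ maps (a standard consequence of the geometric definition in Section~2) yields two comparisons with constants depending only on $n,H$: namely $|B^{*}|\leq C_1|f(B)|$ and, applied to $f^{-1}$, $|f^{-1}(B^{*})|\leq C_2|B|$. Applying the reverse Hölder estimate on $B^{*}$ and enlarging the integral from $f(B)$ to $B^{*}$ gives
\[
\int_{f(B)} J_{f^{-1}}^{1+\alpha}\,dy \;\leq\; \int_{B^{*}} J_{f^{-1}}^{1+\alpha}\,dy \;\leq\; C\,\frac{|f^{-1}(B^{*})|^{1+\alpha}}{|B^{*}|^{\alpha}}.
\]

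Finally, I would plug in $|f^{-1}(B^{*})|\leq C_2|B|$ and $|B^{*}|\geq|f(B)|$ to obtain
\[
\int_B J_f^{-\alpha}\,dx \;\leq\; C\,\frac{(C_2|B|)^{1+\alpha}}{|f(B)|^{\alpha}},
\]
and dividing by $|B|$ yields the desired estimate with a constant depending only on $n$ and $H$. The only delicate points are ensuring the same $\alpha$ from Gehring's theorem works (it does, since it depends only on the $A_\infty$ constants of $J_{f^{-1}}$, which in turn depend only on $n,H$) and that the two quasi-symmetric volume comparisons hold uniformly in $B$; both are classical facts about $H$-q.c.\ maps and are the main content of the argument beyond the change of variables.
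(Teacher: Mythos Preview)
Your argument is correct. The paper does not give its own proof of this lemma: it is stated as a citation to \cite[Chap.~V]{stein}, with the only hint being the sentence preceding it, namely that the result follows from the reverse H\"older inequality satisfied by the Jacobian of a q.c.\ map. Your proof is precisely an elaboration of that hint: the change of variables $y=f(x)$ reduces the negative-power integral over $B$ to $\int_{f(B)}J_{f^{-1}}^{1+\alpha}$, Gehring's reverse H\"older inequality (applied to the $H$-q.c.\ map $f^{-1}$) handles this on a ball, and quasi-symmetry lets you pass from $f(B)$ to a genuine ball $B^*$ with $|B^*|\simeq|f(B)|$ and $|f^{-1}(B^*)|\simeq|B|$. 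Each of these ingredients depends only on $n$ and $H$, so the constants are as claimed. There is nothing to add; your write-up supplies the proof the paper omits, by exactly the mechanism the paper indicates.
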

We will also use the weak convergence property of q.c. maps, see \cite{Va}: a family of uniformly $H$-q.c. maps has the weak compactness property when they are well normalized:
$\{f_k\} $ are uniformly $H$ q.c. and $f_{k}(0)$ is uniformly bounded and suppose $\exists$ $C\geq 1$
and a ball $B$, s.t.
\begin{equation}
\frac{1}{C}\leq \int_{B}J_{f_k}(x)dx\leq C, \forall \ k=1,2,\dots,
\end{equation}
then it has a subsequence, still denoted as $f_k$, that weakly converges to a $H'$ q.c. map where $H'$ depends on $H$ and $n$,
\begin{equation}\int_{\mathbb{R}^n} \chi_{\Omega}J_{f_k}(x)dx\rightarrow \int_{\mathbb{R}^n}\chi_{\Omega}J_{f}(x)dx \quad \mbox{as}
\quad k\rightarrow \infty.\end{equation}
After all the preliminary facts of q.c. maps, we come to present Bonk, Heinonen and Saksman's result.
\begin{theorem}(Theorem 1.2 in \cite{bonk}) \label{bonk}For each $n\geq 2$ and $H\geq1$
there exist constants $\epsilon_0=\epsilon_0(H,n)$, $H'=H'(H,n)$ and $C=C(H,n)$ with the following
property: If $g:\mathbb{R}^n\rightarrow \mathbb{R}^n$ is an $H$-q.c. map and if $w$ is a function on $\mathbb{R}^n$, taking the form
\begin{equation}w(x)=\mathfrak{L}\mu \circ g:= \int_{\mathbb{R}^n}\log\frac{1}{|g(x)-y|} d\mu(y),
\end{equation}
where $\mu$ is a signed Radon measure with finite total variation, satisfying
\begin{equation}
\int_{\mathbb{R}^n}\log^{+}|y|d|\mu|(y)<\infty,
\end{equation}
and
$$\|\mu\|=\int_{\mathbb{R}^n}d|\mu|(y)<\epsilon_0$$ then there exists a $H'$-q.c. map $f:\mathbb{R}^n\rightarrow \mathbb{R}^n$, s.t.
\begin{equation}
C^{-1} e^{nw}\leq J_{f}(x)\leq C e^{nw}.
\end{equation}
\end{theorem}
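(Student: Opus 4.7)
The goal is to construct an $H'$-quasiconformal map $f:\mathbb{R}^n\to\mathbb{R}^n$ with $C^{-1}e^{nw}\leq J_f\leq C e^{nw}$, where $w=\mathfrak{L}\mu\circ g$ for the given $H$-q.c.\ map $g$ and a signed Radon measure $\mu$ with $\|\mu\|<\epsilon_0(H,n)$. The approach I would take is a quasiconformal-flow argument.

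\textbf{Reduction.} Since composition with a fixed $H$-q.c.\ map preserves quasiconformality in a controlled way (as recalled at the start of Section~2) and transforms Jacobians multiplicatively, it suffices to treat the case $g=\mathrm{id}$, where $w(x)=\int\log\tfrac{1}{|x-y|}\,d\mu(y)$, and then post-compose the output with $g$ at the end.

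\textbf{Quasiconformal flow.} Set $\mu_t:=t\mu$ and $w_t(x):=\mathfrak{L}\mu_t(x)=tw(x)$ for $t\in[0,1]$, so that $w_t$ continuously interpolates between $0$ and $w$. I would construct a one-parameter family of q.c.\ maps $\{f_t\}_{t\in[0,1]}$ with $f_0=\mathrm{id}$, $f_1=f$, as the flow $\partial_t f_t(x)=V_t(f_t(x))$ of a time-dependent vector field $V_t$ chosen so that $J_{f_t}\asymp e^{nw_t}$. Since $\partial_t\log J_{f_t}=(\operatorname{div} V_t)\circ f_t$, the target Jacobian law forces $\operatorname{div} V_t(y)=n\,w(f_t^{-1}(y))$. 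A concrete realization is $V_t=\nabla\phi_t$ with $\phi_t$ the Newtonian potential of $n\,w\circ f_t^{-1}$, giving a coupled ODE/PDE system: an ODE for $f_t$ driven by $\nabla\phi_t$, and a Poisson equation determining $\phi_t$ in terms of the pushed-forward measure $(f_t)_*\mu$. At $t=1$, $f:=f_1$ has $J_f\asymp e^{nw}$ by construction, with an implicit constant $C$ depending only on $H$ and $n$.

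\textbf{Quantitative estimates.} The central estimate is $\|DV_t\|_{L^\infty}\lesssim\|\mu\|$, obtained from Calder\'on--Zygmund bounds on the singular-integral kernels defining $V_t$ applied to the finite measure $\mu$. Under $\|\mu\|<\epsilon_0$, this forces the pointwise distortion of $f_t$ to stay in a bounded range $[1,H']$ for all $t\in[0,1]$, with $H'=H'(H,n)$, by a standard Gr\"onwall-type argument on the eigenvalues of $Df_t$. Integration of the Jacobian evolution over $t\in[0,1]$ produces the desired comparison at $t=1$.

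\textbf{Main obstacle.} The principal difficulty is closing the coupled nonlocal system: $V_t$ depends on $f_t$ through $f_t^{-1}$, so existence and all estimates must be established self-consistently via a fixed-point / continuity-in-$t$ argument. The smallness $\|\mu\|<\epsilon_0$ is exactly what makes the iteration contract: it keeps $V_t$ small enough in operator norm for $f_t$ to remain a controlled perturbation of the identity in the q.c.\ sense, so the flow never exits the regime where the linearization is invertible. A secondary technical point is that $\mu$ is only a Radon measure (possibly atomic), so the singular integrals defining $V_t$ must be handled via mollification followed by a limit; the weak-compactness property of uniformly $H'$-q.c.\ maps stated after Example~\ref{cone} produces the limiting map~$f$ and preserves the Jacobian comparison under weak convergence of Jacobians.
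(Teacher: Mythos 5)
First, a point of comparison: the paper does not prove this statement at all --- it is quoted verbatim from Bonk--Heinonen--Saksman (Theorem 1.2 of \cite{bonk}) and used as a black box in Section 3, the author only remarking that its proof uses quasiconformal flows. So your flow strategy is in the spirit of the actual proof, but as written it has two genuine gaps. The first is the opening reduction: it is not true that the case of general $g$ follows from $g=\mathrm{id}$ by composing afterwards. If $J_{f_0}\simeq e^{n\mathfrak{L}\mu}$, then the chain rule gives $J_{f_0\circ g}(x)=J_{f_0}(g(x))\,J_g(x)\simeq e^{nw(x)}J_g(x)$ and $J_{g\circ f_0}(x)=J_g(f_0(x))\,J_{f_0}(x)$; in either order the uncontrolled factor $J_g$ survives, and no composition trick removes it. This is exactly why the theorem is stated for an arbitrary $H$-q.c.\ $g$ with $\epsilon_0,H',C$ depending on $H$, and why the present paper needs the general-$g$ form (in Proposition \ref{3.1} it is applied with $g=\varphi_\beta^{-1}$ precisely so that the extra Jacobian factor appears where it is wanted).

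The second and more serious gap is the quantitative core. The claim $\|DV_t\|_{L^\infty}\lesssim\|\mu\|$ cannot be obtained from Calder\'on--Zygmund theory (which yields $L^p$, weak-$L^1$ or BMO bounds for a finite measure, never $L^\infty$), and in fact it cannot be true: a uniform bound on $DV_t$ would make the flow maps bi-Lipschitz with Jacobian bounded above and below, contradicting $J_f\simeq e^{nw}$ already in the model case $\mu=\epsilon\delta_0$, where $w=\epsilon\log\frac{1}{|x|}$ is unbounded. What the flow proof actually rests on (via Reimann's theorem on quasiconformal deformations, \cite{Reimann2}) is that only the \emph{traceless symmetrized} part of $DV$ must be bounded, and this holds for the specific kernel field $v(x)=\int(x-y)\log|x-y|\,d\mu(y)$: its divergence is $-n\,\mathfrak{L}\mu+\|\mu\|$ while the traceless strain of the kernel $(x-y)\log|x-y|$ is a bounded matrix, so the strain is pointwise $\leq C\|\mu\|$ with no singular-integral theory at all. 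Your choice $V_t=\nabla\phi_t$ with $\phi_t$ a Newtonian potential of $n\,w\circ f_t^{-1}$ does not enjoy such a pointwise bound (the traceless Hessian of a Newtonian potential of an unbounded density is a genuine singular integral), and it is also what creates the $f_t^{-1}$-coupling you identify as the main obstacle, which the kernel-based field avoids. Finally, even with the right field the conclusion $J_{f_1}\simeq e^{nw}$ is not ``by construction'': since the divergence is evaluated along the flow, one must compare $\int_0^1 w(f_t(x))\,dt$ with $w(x)$, and controlling that error (using the smallness of $\|\mu\|$ and the dilatation of $g$) is a substantial part of the argument in \cite{bonk} that your sketch does not address.
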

They also get when taking $g=id$ (where $H=1$) then one takes $\epsilon_0=\epsilon_0(n)=\frac{n}{128}
\cdot 12^{-2n}e^{-4(n-1)n}$. The proof of this theorem uses quasi-conformal flow, which has many
 applications in $n=2$, but rarely in higher dimensions. We are going to use this result in
 our proof in section 3.

\section{Proof of Theorem {\ref{main}}}Suppose $\mu$ is a signed Radon measure with $\int_{\mathbb{R}^n}d|\mu|(y)<\infty$ and
$\int_{\mathbb{R}^n}d\mu(y)<1$. Define the logarithmic potential
$$\dstyle \mathfrak{L}(\mu)(\cdot):=\int_{\mathbb{R}^n }\log\frac{1}{|\cdot-y|}d\mu(y), $$
for $\mu$ satisfies 
\begin{equation}\label{add}\int_{\mathbb{R}^n}\log^+|y|d|\mu|(y)<\infty.\end{equation}
Such integral is finite for a.e. $x\in \mathbb{R}^n$. This is because the maximal function $M\mu$ of $\mu$,
$$ \textsl{M}\mu(x) :=\sup_{r>0}\frac{1}{|B(x,r)|}\int_{B(x,r)}d|\mu|, x\in \mathbb{R}^n$$ is finite for almost
 every $x\in \mathbb{R}^n$, and whenever $\textsl{M}\mu(x)<\infty $,
 $$\dstyle \int_{\mathbb{R}^n} \log^+\left(\frac{1}{|x-y|}\right)d|\mu|(y)<\infty.$$
Indeed
\begin{equation}\begin{array}{lcl}
\dstyle \int_{\mathbb{R}^n}\log^+(\frac{1}{|x-y|})d|\mu|(y)&=&\dstyle \int_{0}^\infty \mu\{y|\log^+(\frac{1}{|x-y|})\geq \lambda\}
d\lambda\\
&\leq&\dstyle \int_{0}^\infty \textsl{M}\mu(x) |B(x, e^{-\lambda})|d\lambda\\
&=&\dstyle \textsl{M}\mu(x)\int_{0}^\infty \left|\{y|\log^+(\frac{1}{|x-y|})\geq \lambda\}\right|d\lambda\\
&=& \dstyle \textsl{M}\mu(x) \int_{\mathbb{R}^n} \log^+(\frac{1}{|x-y|})dx,
\end{array}
\end{equation}
where $|\cdot |$ is the Lebesgue measure of set on $\mathbb{R}^n$.
Together with (\ref{add}), one knows that $\mathfrak{L}(\mu)(x)<\infty$ if $x$ satisfies $\textsl{M}\mu(x)<\infty $.

Instead of $\mu$, we consider a special kind of measure $\nu=\beta \delta_0+\frac{1}{4\pi^2}
\mu_{\epsilon}$ first, and prove there is a q.c. map whose Jacobian is comparable to $e^{n\mathfrak{L}(\nu)}$.
\begin{proposition}\label{3.1}
For each $H>1$, if constant $\beta<1$ satisfies $\frac{1}{1-\beta}\leq H$, and $\epsilon $ satisfies $\epsilon
\leq \epsilon_{0}(H,n)$ where $\epsilon_0(H, n)$ is the function mentioned in Theorem
\ref{bonk}, then for measure $\nu=\beta \delta_0
+\mu_{\epsilon}$, where $\delta_0$ is the dirac measure at $0$
and $\mu_\epsilon $ is a signed Radon measure on $\mathbb{R}^n$ with total
variation $$\|\mu_\epsilon\|=\int_{\mathbb{R}^n}d|\mu_\epsilon|(y)<\epsilon ,$$
and \begin{equation}\label{log}\int_{\mathbb{R}^n}\log^+|y|d|\mu_\epsilon|(y)<\infty,
\end{equation}
there exists a $H'$-q.c. map $f:\mathbb{R}^n\rightarrow \mathbb{R}^n$, such that

\begin{equation}
C^{-1} e^{n\mathfrak{L}(\nu)(x)}\leq J_{f}(x)\leq C e^{n \mathfrak{L}(\nu)(x)},
\end{equation}
where $C=C(\beta,H,n)$ and $H'=H'(\beta,H,n)$.
\end{proposition}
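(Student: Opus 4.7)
The strategy is to split the measure as $\nu=\beta\delta_0+\mu_\epsilon$ and let the two pieces be absorbed by two different quasiconformal maps which are then composed. The Dirac mass contributes $\beta\log(1/|x|)$ to the logarithmic potential and is therefore handled by the cone map $\varphi_\beta$ from Example \ref{cone}, whose Jacobian $(1-\beta)/|x|^{n\beta}$ is (up to constants) exactly $e^{n\beta\log(1/|x|)}$. The small measure $\mu_\epsilon$, on the other hand, is exactly what the Bonk--Heinonen--Saksman Theorem \ref{bonk} is designed to handle. Writing
$$\mathfrak{L}(\nu)(x)=\beta\log\tfrac{1}{|x|}+\mathfrak{L}(\mu_\epsilon)(x),$$
the target factors as $e^{n\mathfrak{L}(\nu)(x)}=|x|^{-n\beta}\,e^{n\mathfrak{L}(\mu_\epsilon)(x)}$, and one needs a quasiconformal map whose Jacobian reproduces both factors.

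The key step is to apply Theorem \ref{bonk} not with $g=\mathrm{id}$ but with $g=\varphi_\beta^{-1}$. This choice is legitimate because $\varphi_\beta^{-1}$ is quasiconformal with the same dilatation as $\varphi_\beta$, namely $1/(1-\beta)\leq H$ by hypothesis, and because the measure $\mu_\epsilon$ already satisfies the smallness assumption $\|\mu_\epsilon\|<\epsilon\leq\epsilon_0(H,n)$ as well as the logarithmic integrability (\ref{log}). Theorem \ref{bonk} then produces constants $H''=H''(H,n)$, $C_0=C_0(H,n)$ and an $H''$-quasiconformal map $f_0:\mathbb{R}^n\to\mathbb{R}^n$ satisfying
$$C_0^{-1}\exp\!\bigl(n\,\mathfrak{L}(\mu_\epsilon)(\varphi_\beta^{-1}(y))\bigr)\leq J_{f_0}(y)\leq C_0\exp\!\bigl(n\,\mathfrak{L}(\mu_\epsilon)(\varphi_\beta^{-1}(y))\bigr).$$

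I would then set $f:=f_0\circ \varphi_\beta$. Quasiconformality of $f$ and the bound $H(f)\leq H''\cdot 1/(1-\beta)=:H'(\beta,H,n)$ follow from the composition rule recalled in Section 2. By the almost-everywhere chain rule for Jacobians of quasiconformal maps,
$$J_f(x)=J_{f_0}(\varphi_\beta(x))\,J_{\varphi_\beta}(x)\asymp \exp\!\bigl(n\,\mathfrak{L}(\mu_\epsilon)(x)\bigr)\cdot\frac{1-\beta}{|x|^{n\beta}}=(1-\beta)\,e^{n\mathfrak{L}(\nu)(x)},$$
where the telescoping $\varphi_\beta^{-1}\circ\varphi_\beta=\mathrm{id}$ is what makes the method work; the implicit constant depends on $\beta$, $H$, $n$ only through $C_0$ and $1-\beta$, giving the claimed $C=C(\beta,H,n)$.

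The main point to verify carefully is not the computation above but rather the admissibility of every step almost everywhere: the potential $\mathfrak{L}(\mu_\epsilon)$ must be finite at generic $x$ and at $\varphi_\beta^{-1}(y)$ for generic $y$, and the chain rule must hold for the composition $f_0\circ\varphi_\beta$ despite the singularity of $\varphi_\beta$ at $0$. The finiteness of the potential at a.e. point reduces to the maximal function bound already exhibited in the excerpt under condition (\ref{add}), applied both in the $x$-variable and, after noting that $\varphi_\beta$ sends null sets to null sets, in the $\varphi_\beta^{-1}(y)$-variable. The chain rule for compositions of quasiconformal mappings is standard and valid off a set of Lebesgue measure zero, since the singular point $\{0\}$ is null. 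With these two checks in place, the proposition follows.
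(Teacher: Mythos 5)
Your proposal is correct and follows essentially the same route as the paper: apply Theorem \ref{bonk} with $g=\varphi_\beta^{-1}$ (legitimate since $H(\varphi_\beta^{-1})=\frac{1}{1-\beta}\leq H$ and $\|\mu_\epsilon\|<\epsilon_0(H,n)$), then compose the resulting map with $\varphi_\beta$ and use the a.e.\ chain rule together with $J_{\varphi_\beta}(x)=\frac{1-\beta}{|x|^{n\beta}}\simeq e^{n\mathfrak{L}(\beta\delta_0)(x)}$ to obtain $J_f\simeq e^{n\mathfrak{L}(\nu)}$ with $H'=\frac{1}{1-\beta}H''$. Your additional remarks on the a.e.\ finiteness of the potential and the validity of the chain rule off the null set $\{0\}$ are consistent with the paper's preliminary discussion and do not change the argument.
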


\begin{proof}
Recall in the Example \ref{cone}, $\varphi_{\beta}=\frac{x}{|x|^\beta}$. Take $g=\varphi_{\beta}^{-1}$
which is also a q.c map with $H(g)=H(\varphi_{\beta}^{-1})=\frac{1}{1-\beta}<H$. Thus $g$ is
 $H$-q.c. map. Apply Theorem \ref{bonk} to such $H$ and $g$ and $\mu_\epsilon$. By our assumption
 $\|\mu_{\epsilon}\|\leq \epsilon_0(H,n)$, every condition in Theorem \ref{bonk} is satisfied.
Thus there exists a $H''$-q.c map $f_\epsilon:\mathbb{R}^n\rightarrow \mathbb{R}^n$, where $H''=H''(H,n)$, such that
\begin{equation}
\dstyle C^{-1} e^{n \mathfrak{L}(\mu_\epsilon)\circ g(y)}\leq J_{f_{\epsilon}}(y)\leq C e^{n \mathfrak{L}(\mu_\epsilon)\circ g(y)}.
\end{equation}
i.e.
\begin{equation}\label{2.1}
\dstyle C^{-1} e^{n \int_{\mathbb{R}^n}\log\frac{1}{|g(y)-z|}d\mu_{\epsilon}(z)}\leq J_{f_{\epsilon}}(y)\leq
 C e^{ n \int_{\mathbb{R}^n}\log\frac{1}{|g(y)-z|}d\mu_{\epsilon}(z)},
\end{equation}
where $C=C(H,n)$.
Define $f(x)= f_{\epsilon}\circ\varphi_{\beta}(x)$. Then $f:\mathbb{R}^n\rightarrow \mathbb{R}^n$ is a $H'$-q.c. map, where
$H'=\frac{1}{1-\beta}\cdot H''$, thus $H'$ depends on $\beta, H$ and $n$, and $f$ has property
\begin{equation}\label{2.4}
\begin{array}{lcl}
\dstyle J_{f}(x)=J_{f_{\epsilon}\circ\varphi_{\beta}}(x)&=&\dstyle J_{f_{\epsilon}}(\varphi_{\beta}(x))
\cdot J_{\varphi_{\beta}}(x).\\
\end{array}
\end{equation}
Plug $y=\varphi_{\beta}(x)$ into (\ref{2.1}),

\begin{equation}\label{2.2}
\dstyle C^{-1} e^{n\int_{\mathbb{R}^n}\log \frac{1}{|x-z|}d\mu_{\epsilon}(z)}\leq J_{f_{\epsilon}}
(\varphi_{\beta}(x))\leq C e^{ n \int_{\mathbb{R}^n}\log \frac{1}{|x-z|}d\mu_{\epsilon}(z)}.
\end{equation}
On the other hand,
$$\mathfrak{L}(\beta \delta_{0})(x)=\int_{\mathbb{R}^n}\log\frac{1}{|x-z|}d\beta\delta_{0}(z)=-\beta \log|x|,$$
thus as $$J_{\varphi_{\beta}}(x)= \frac{1-\beta}{|x|^{n\beta}},$$
\begin{equation}\label{2.3}
\dstyle C^{-1}(\beta) e^{n \mathfrak{L}(\beta \delta_{0})(x)}\leq J_{\varphi_{\beta}}(x)\leq C(\beta)  e^{n \mathfrak{L}(\beta \delta_{0})(x)}.
\end{equation}
Combining (\ref{2.2}) and (\ref{2.3}) and plug into (\ref{2.4}) we get
$$C^{-1} e^{n\mathfrak{L}( \nu)(x)} \leq J_{f}(x)\leq C e^{n \mathfrak{L}( \nu)(x)},$$
where $C=C(\beta,H,n)$.
Thus $f$ is the $H'$-q.c. map that we want.
\end{proof}
Now we want to show a modified version of the above proposition. The purpose of this modification is to delete condition (\ref{log}) and to fit the problem into the setting of Theorem \ref{main}. The proof follows the same spirit of Proposition 7.2 in \cite{bonk}. Before the statement of this proposition, some more terminology needs to be introduced. As mentioned before, for a signed measure of finite absolute integral, the maximal
function $\textsl{M}\nu$ of $\nu$,
$$ \textsl{M}\nu(x) :=\sup_{r>0}\frac{1}{|B(x,r)|}\int_{B(x,r)}d|\nu|, x\in \mathbb{R}^n$$
is finite for almost every point. Thus one can choose a point $x_0$ very close to $0$, such that
$\textsl{M}\nu(x_0)<\infty$. By the argument before Proposition \ref{3.1},  
\begin{equation}\label{3.3}
\int_{\mathbb{R}^n}\log^+(\frac{1}{|x_0-y|})d|\nu|(y)<\infty.
\end{equation}
Define 
\begin{equation}
\mathfrak{\tilde{L}}(\nu) := \int_{\mathbb{R}^n}\log \frac{|x_0-y|}{|x-y|}d\nu(y).
\end{equation}
Then it is finite whenever $\textsl{M}\nu(x)<\infty$, hence for almost every $x\in \mathbb R^n$ it is finite.
Now we want to modify Proposition \ref{3.1} by deleting condition (\ref{log}) while
replacing $\mathfrak{L}(\nu) $ to $\mathfrak{\tilde{L}}(\nu) $. 
\begin{proposition}\label{3.2}
For each $H>1$, if constant $\beta<1$ satisfies $\frac{1}{1-\beta}\leq H$, and $\epsilon $ satisfies $\epsilon
\leq \epsilon_{0}(H,n)$ where $\epsilon_0(H, n)$ is the function mentioned in Theorem
\ref{bonk}, then for measure $\nu=\beta \delta_0
+\mu_{\epsilon}$, where $\delta_0$ is the Dirac measure at $0$
and $\mu_\epsilon $ is a signed Radon measure on $\mathbb{R}^n$ with total
variation $$\int_{\mathbb{R}^n}d|\mu_\epsilon|(y)<\epsilon ,$$ there exists a $\tilde{H}$-q.c. map $f:\mathbb{R}^n\rightarrow \mathbb{R}^n$, such that

\begin{equation}
C^{-1} e^{n \mathfrak{\tilde{L}}(\nu)(x)}\leq J_{f}(x)\leq C e^{n \mathfrak{\tilde{L}}(\nu)(x)},
\end{equation}
where $C=C(\beta,H,n)$ and $\tilde{H}=\tilde{H}(\beta,H,n)$.
\end{proposition}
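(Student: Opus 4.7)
The plan is to approximate $\mu_\epsilon$ by compactly supported truncations, apply Proposition~\ref{3.1} to each truncation, renormalize the resulting q.c.\ maps by translation and dilation, and pass to the limit using the weak compactness property of quasiconformal maps reviewed in Section~2. This follows the spirit of Proposition~7.2 of \cite{bonk}.

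For each integer $k\geq 1$, set $\mu_\epsilon^k := \mu_\epsilon|_{B(0,k)}$ and $\nu_k := \beta\delta_0 + \mu_\epsilon^k$. Each $\mu_\epsilon^k$ has compact support, so condition (\ref{log}) holds trivially, and $\|\mu_\epsilon^k\|\leq \|\mu_\epsilon\|<\epsilon_0(H,n)$. Proposition~\ref{3.1} therefore produces $H'$-q.c.\ maps $f_k:\mathbb{R}^n\to\mathbb{R}^n$ with $C^{-1} e^{n\mathfrak{L}(\nu_k)(x)} \leq J_{f_k}(x) \leq C e^{n\mathfrak{L}(\nu_k)(x)}$ almost everywhere, and the constants $C$, $H'$ depending only on $\beta,H,n$.

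The next step is to absorb the (possibly diverging) constant $\mathfrak{L}(\nu_k)(x_0)$ by a dilation. Since $\mathfrak{L}(\nu_k)(x) - \mathfrak{L}(\nu_k)(x_0) = \mathfrak{\tilde L}(\nu_k)(x)$ and the truncated $\mathfrak{L}(\nu_k)(x_0)$ is finite (the $\beta\delta_0$ part contributes $-\beta\log|x_0|$, and the $\mu_\epsilon^k$ part is finite by the maximal-function bound at $x_0$), set $\lambda_k := \exp(-\mathfrak{L}(\nu_k)(x_0))$ and $\tilde f_k(x) := \lambda_k(f_k(x) - f_k(0))$. Translations and scalar multiplications are conformal, so $\tilde f_k$ is still $H'$-q.c.\ with $\tilde f_k(0)=0$, and its Jacobian satisfies $C^{-1} e^{n\mathfrak{\tilde L}(\nu_k)(x)} \leq J_{\tilde f_k}(x) \leq C e^{n\mathfrak{\tilde L}(\nu_k)(x)}$. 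I would then verify that $\mathfrak{\tilde L}(\nu_k)(x) \to \mathfrak{\tilde L}(\nu)(x)$ for a.e.\ $x$: whenever $\textsl{M}|\mu_\epsilon|(x) < \infty$ and $\textsl{M}|\mu_\epsilon|(x_0) < \infty$ (which occurs a.e.), the maximal-function estimate preceding Proposition~\ref{3.1} shows that the integrand $\log\frac{|x_0-y|}{|x-y|}$ is dominated in absolute value by a $|\mu_\epsilon|$-integrable function that is independent of $k$, so dominated convergence yields the limit.

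Finally I would pass to the limit. Choose any ball $B$ on which $\mathfrak{\tilde L}(\nu)$ is essentially bounded (e.g.\ away from $\{0,x_0\}$); by Egorov the functions $\mathfrak{\tilde L}(\nu_k)$ are uniformly bounded on a subset of $B$ of positive measure, and the Jacobian bounds then give $C^{-1} \leq \int_B J_{\tilde f_k}\,dx \leq C$ uniformly in $k$ (with a possibly different constant). Combined with $\tilde f_k(0)=0$, the weak compactness theorem stated in Section~2 yields a subsequence, still denoted $\tilde f_k$, converging to a $\tilde H$-q.c.\ map $f$ with $J_{\tilde f_k}\to J_f$ in the weak sense $\int \chi_\Omega J_{\tilde f_k}\to \int \chi_\Omega J_f$. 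The main obstacle is converting this weak convergence, together with the a.e.\ convergence of the exponential weight, into the pointwise bound $C^{-1}e^{n\mathfrak{\tilde L}(\nu)(x)}\leq J_f(x)\leq Ce^{n\mathfrak{\tilde L}(\nu)(x)}$. I would handle this by averaging both inequalities over a small ball $B_r(x)$: sending $k\to\infty$ gives the same bounds for $\frac{1}{|B_r(x)|}\int_{B_r(x)} J_f$ in terms of $\frac{1}{|B_r(x)|}\int_{B_r(x)} e^{n\mathfrak{\tilde L}(\nu)}$ (for which one needs a local uniform integrability / Vitali argument to exchange limit and exponential), and then sending $r\to 0$ and applying Lebesgue differentiation at a.e.\ Lebesgue point of both $J_f$ and $e^{n\mathfrak{\tilde L}(\nu)}$ recovers the pointwise estimate, exactly as in the concluding step of the proof of Proposition~7.2 in \cite{bonk}.
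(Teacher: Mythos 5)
Your overall route is the same as the paper's: truncate $\mu_\epsilon$ to $\mu_\epsilon|_{B(0,k)}$, apply Proposition~\ref{3.1} to each $\nu_k$, rescale (and translate) by a constant to trade $\mathfrak{L}(\nu_k)$ for $\mathfrak{\tilde{L}}(\nu_k)$ with constants independent of $k$, and then pass to the limit via the weak compactness of uniformly $H'$-q.c.\ maps together with Lebesgue differentiation. That skeleton is correct and is exactly how the paper argues.

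The gap is in how you justify the hypotheses of the weak-compactness lemma and the exchange of limits, which is precisely the content of the paper's Lemma~\ref{L3.5} (Lemma 7.1 in \cite{bonk}) that your proposal never establishes. First, your Egorov argument only yields the \emph{lower} bound $\int_B J_{\tilde f_k}\,dx\geq C^{-1}$: boundedness of $\mathfrak{\tilde{L}}(\nu_k)$ on a positive-measure subset of $B$ says nothing about $\int_B e^{n\mathfrak{\tilde{L}}(\nu_k)}\,dx$ from above, and the \emph{upper} bound $\int_B J_{\tilde f_k}\,dx\leq C$ is required by the normalization condition in the compactness statement of Section 2. (Also, $\mathfrak{\tilde{L}}(\nu)$ need not be essentially bounded on any ball --- $\mu_\epsilon$ may have atoms, even a dense set of them --- so ``choose $B$ where $\mathfrak{\tilde{L}}(\nu)$ is essentially bounded'' is not available; only the a.e.-finiteness is.) Second, the step you flag parenthetically as ``a local uniform integrability / Vitali argument'' is the real analytic heart of the proof and does not follow from the pointwise domination in the $y$-variable that you did verify: what is needed is a dominating function in the $x$-variable, namely $e^{n\mathfrak{\tilde{L}}(\nu_k)(x)}\leq e^{nU(x)}$ uniformly in $k$ with $\int_B e^{nU(x)}\,dx<\infty$ for every ball $B$. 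Proving $\int_B e^{nU}<\infty$ uses in an essential way that the total positive mass of $\nu$ is strictly less than $1$ (so that, after Jensen's inequality with the normalized positive part as a probability measure, one integrates $|x-y|^{-\lambda}$ with $\lambda=n\|\nu_+\|<n$, which is locally integrable); this is where the smallness hypotheses on $\beta$ and $\epsilon$ enter the limiting argument. Once this estimate is in place it simultaneously gives the missing uniform upper bound on $\int_B J_{\tilde f_k}$ and the $L^1(B)$ convergence $e^{n\mathfrak{\tilde{L}}(\nu_k)}\to e^{n\mathfrak{\tilde{L}}(\nu)}$ needed in your averaging step, and the remainder of your argument then goes through exactly as in the paper.
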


We need a lemma to prove this proposition. The lemma is due to dominated convergence theorem in this setting. It was shown in \cite{bonk}.
\begin{lemma}\label{L3.5}(Lemma 7.1 in \cite{bonk}) Assume $\|\mu\|<1$. Define $\mu_k=\mu|_{B(0,k)}$ for $k=1,2,...$. Then
$e^{n \mathfrak{\tilde{L}}(\mu)}$ is locally integrable and for every ball $B\subseteq \mathbb{R}^n$(not necessarily centered at $0$)
we have
\begin{equation}\int_{B}|e^{n\mathfrak{\tilde{L}}(\mu_k)(x)}-e^{n \mathfrak{\tilde{L}}(\mu)(x)}|\rightarrow 0, \quad \mbox{as} \quad
k\rightarrow \infty.
\end{equation}
\end{lemma}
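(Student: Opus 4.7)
The plan is to deduce the convergence statement from two ingredients: (i) local integrability of $e^{n\mathfrak{\tilde{L}}(\mu)}$, and (ii) uniform convergence of the potentials $\mathfrak{\tilde{L}}(\mu_k) \to \mathfrak{\tilde{L}}(\mu)$ on every bounded set. Once both are in hand, I would write
\[
e^{n\mathfrak{\tilde{L}}(\mu_k)(x)} - e^{n\mathfrak{\tilde{L}}(\mu)(x)} = e^{n\mathfrak{\tilde{L}}(\mu)(x)}\bigl(e^{n[\mathfrak{\tilde{L}}(\mu_k)(x) - \mathfrak{\tilde{L}}(\mu)(x)]} - 1\bigr),
\]
so that a uniform bound $\sup_{x\in B}|\mathfrak{\tilde{L}}(\mu_k)(x) - \mathfrak{\tilde{L}}(\mu)(x)|\le \varepsilon_k \to 0$ combined with the $L^1(B)$ bound on $e^{n\mathfrak{\tilde{L}}(\mu)}$ immediately delivers the claim.

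The uniform convergence is the easy half. For $B\subset B(0,R)$ and $k\ge 2(R+|x_0|)$, the difference $\mathfrak{\tilde{L}}(\mu_k)-\mathfrak{\tilde{L}}(\mu) = -\int_{|y|>k}\log(|x_0-y|/|x-y|)\,d\mu(y)$ has an integrand controlled by $|\log(1+(|x-x_0|)/(|y|-R))|\le C R/k$ for all $x\in B$, so the entire integral is bounded by $(CR/k)\|\mu\|\to 0$ uniformly in $x\in B$. This takes care of step (ii) and gives tails $\varepsilon_k = O(1/k)$.

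The core work lies in the local integrability of $e^{n\mathfrak{\tilde{L}}(\mu)}$, and this is where the hypothesis $\|\mu\|<1$ must be exploited. My plan is to split $\mu=\mu^+-\mu^-$ by Hahn--Jordan, set $c^{\pm}=\|\mu^{\pm}\|$, and observe that $c^++c^- = \|\mu\|<1$ allows me to choose Hölder conjugate exponents $p,q$ with $1/p=c^++\delta$, $1/q=c^-+\delta$ where $2\delta = 1-\|\mu\|$. Then
\[
\int_B e^{n\mathfrak{\tilde{L}}(\mu)}\,dx \le \Bigl(\int_B e^{pn\mathfrak{\tilde{L}}(\mu^+)}\,dx\Bigr)^{1/p}\Bigl(\int_B e^{-qn\mathfrak{\tilde{L}}(\mu^-)}\,dx\Bigr)^{1/q},
\]
and Jensen's inequality against the probability measures $d\mu^{\pm}/c^{\pm}$ yields, for instance,
\[
e^{pn\mathfrak{\tilde{L}}(\mu^+)(x)} \le \frac{1}{c^+}\int\Bigl(\frac{|x_0-y|}{|x-y|}\Bigr)^{pnc^+}\,d\mu^+(y),
\]
with exponent $pnc^+=nc^+/(c^++\delta)<n$. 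After Fubini, the inner integral $\int_B|x-y|^{-pnc^+}\,dx$ is uniformly bounded in $y$ (standard Riesz-type estimate with exponent below $n$), and the factor $|x_0-y|^{pnc^+}$ only cancels the growth of $|x-y|^{-pnc^+}$ for large $|y|$, so the whole thing is bounded by $C\|\mu^+\|$. The symmetric estimate for the $\mu^-$ factor completes the proof of local integrability, and the same reasoning applied to $\mu_k$ (since $\|\mu_k\|\le\|\mu\|<1$) shows the bound is uniform in $k$.

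The main obstacle is this local integrability estimate: the signed character of $\mu$ prevents applying Jensen in one stroke, and I must use the strict subcriticality $\|\mu\|<1$ as slack to split the mass by Hölder with exponents keeping each side strictly below the scale-critical value. The pointwise tail estimate and the final exponentiation argument are then routine.
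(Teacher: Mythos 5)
The weak point is your final sentence ``the symmetric estimate for the $\mu^-$ factor completes the proof'': the two factors in your H\"older split are \emph{not} symmetric, and as written that step fails. After Jensen, the $\mu_-$ factor requires
\begin{equation*}
\int_B\int_{\mathbb{R}^n}\Bigl(\frac{|x-y|}{|x_0-y|}\Bigr)^{t}\,d\mu_-(y)\,dx<\infty,\qquad t=qnc^-<n,
\end{equation*}
and here the singular factor is $|x_0-y|^{-t}$, sitting at the fixed point $y=x_0$ and integrated against $d\mu_-(y)$ rather than against Lebesgue measure on $B$. The ``standard Riesz-type estimate with exponent below $n$'' says nothing about this; what you need is $\int|x_0-y|^{-t}\,d\mu_-(y)<\infty$, which does not follow from $\|\mu\|<1$ and is nowhere addressed in your argument --- indeed you never invoke any property of the base point $x_0$, although some property is indispensable: if $\mu_-$ had an atom at $x_0$, or mass concentrating at $x_0$ faster than every power, then $\mathfrak{\tilde{L}}(\mu)\equiv+\infty$ and the lemma is false. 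The gap is fillable, but only by using how $x_0$ was chosen: the paper picks $x_0$ with $\textsl{M}\mu(x_0)<\infty$, which gives $|\mu|(B(x_0,r))\leq Cr^n$ and hence, by a layer-cake computation, $\int_{|y-x_0|\leq 1}|x_0-y|^{-t}\,d|\mu|(y)<\infty$ for every $t<n$. Note that the logarithmic condition (\ref{3.3}) alone would \emph{not} justify your power-type bound, so this is a genuine missing step, not a routine omission.

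For comparison, the paper's proof avoids this issue (and the H\"older split altogether): it only establishes pointwise a.e.\ convergence $\mathfrak{\tilde{L}}(\mu_k)\to\mathfrak{\tilde{L}}(\mu)$ and concludes by dominated convergence, with dominating function $e^{nU}$ where $U$ collects the unfavorable contributions of $\mu_+$ and $\mu_-$; the $\mu_-$ contribution is bounded by a \emph{constant} on $B$ via $\log^+\bigl(\tfrac{|x-y|}{|x_0-y|}\bigr)\leq 1+\log^+(R+|x_0|)+\log^+\bigl(\tfrac{1}{|x_0-y|}\bigr)$ together with (\ref{3.3}), and Jensen is applied only to the $\mu_+$ part with exponent $\lambda=n\|\mu_+\|<n$. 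That argument uses only $\|\mu_+\|<1$ and only log-integrability at $x_0$, so it is simpler and slightly more economical than your two-sided H\"older--Jensen scheme. On the other hand, your treatment of the convergence itself is a nice improvement over the paper's: the uniform tail estimate $\sup_{x\in B}|\mathfrak{\tilde{L}}(\mu_k)(x)-\mathfrak{\tilde{L}}(\mu)(x)|\leq CR\|\mu\|/k$ plus the factorization $e^{n\mathfrak{\tilde{L}}(\mu_k)}-e^{n\mathfrak{\tilde{L}}(\mu)}=e^{n\mathfrak{\tilde{L}}(\mu)}\bigl(e^{n[\mathfrak{\tilde{L}}(\mu_k)-\mathfrak{\tilde{L}}(\mu)]}-1\bigr)$ is correct, bypasses dominated convergence, and even yields a rate; if you repair the $\mu_-$ estimate as above (or replace it by the paper's constant bound), the proof goes through.
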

\begin{proof}
It is easy to see $\mathfrak{\tilde{L}}(\mu_k)(x)\rightarrow \mathfrak{\tilde{L}}(\mu)(x)$ as $k\rightarrow \infty$ for almost every $x\in \mathbb{R}^n$. Define
$$U(x)=\int_{\mathbb{R}^n}\log^+\left(\frac{|x-y|}{|x_0-y|}\right)d\mu_{-}(y)+\int_{\mathbb{R}^n}\log^-
\left(\frac{|x-y|}{|x_0-y|}\right)d\mu_{+}(y)$$
Then
$$e^{\mathfrak{\tilde{L}}(\mu_k)(x)}\leq e^{U(x)}$$
for every $k\in \mathbb{N}$ and every $x\in \mathbb{R}^n$. The lemma follows from the dominated convergence theorem
as long as we can show that
\begin{equation}\label{3.4}\int_{B}e^{nU(x)}dx<\infty,\end{equation}
for every ball $B\subseteq \mathbb{R}^n$. Take $R$, such that $B\subseteq B(0,R)$, then for all $x\in B$, we have
$$\log^+\left( \frac{|x-y|}{|x_0-y|}\right)\leq 1+\log^{+}(R+|x_0|)+\log^+\left(\frac{1}{|x_0-y|}\right),$$
where we've used $\log^+(1+xy)\leq 1+\log^+ x+\log^+ y$.
Together with (\ref{3.3}) we get a uniform estimate
$$\int_{\mathbb{R}^n}\log^+\left(\frac{|x-y|}{|x_0-y|}\right)d\mu_{-}(y)\leq C$$
for $x\in B$.
If $\mu_+=0$ then (\ref{3.4}) is automatically true. Otherwise, define a probability measure $\nu=\frac{1}{\|\mu_+\|}\mu_+$,
where $\|\mu_+\|=\int_{\mathbb{R}^n}d|\mu_+|(y)$. Let $\lambda=n\|\mu_+\|<n$, we have
\begin{equation*}
\begin{array}{lcl}
\dstyle \int_{B}e^{nU(x)}dx&\leq& C\dstyle \int_{B}exp\left(n\int_{\mathbb{R}^n}\log^+(\frac{1}{|x-y|})d\mu_+(y)\right)dx\\
&=&\dstyle \int_{B}exp\left (\lambda\int_{\mathbb{R}^n}\log^+(\frac{1}{|x-y|})d\nu(y)\right)dx\\
&\leq &\dstyle \int_{B}\int_{\mathbb{R}^n} \max\{1,\frac{1}{|x-y|^\lambda}\}d\nu(y)dx<\infty.
\end{array}
\end{equation*}
This finishes the proof of the lemma.
\end{proof}

\begin{proof} of Proposition \ref{3.2}: By the choice of $x_0$ before the statement of Proposition \ref{3.2} and condition (\ref{log}), $\mathfrak{L}(\nu)(x)$ and $\mathfrak{\tilde{L}}(\nu)(x)$ are both finite whenever $\textsl{M}\nu(x)<\infty$. Hence for almost every $x\in \mathbb{R}^n$ they are both finite. Suppose $\nu$ is compactly supported, then $\mathfrak{L}(\nu)$ and $\mathfrak{\tilde{L}}(\nu)$ differ by a constant when they are both finite. Therefore there exists a constant $c>0$ such that
$e^{n\mathfrak{L}(\nu)(x)}=ce^{n\mathfrak{\tilde{L}}(\nu)(x)}$ a.e. $x$. Here $c$ depends on the size of $\textit{supp}(\nu)$. Apply the Proposition \ref{3.1}
to $\nu$ and multiply the q.c. map obtained in this way by a constant(in each direction), we get an $H'$-q.c. map $f:\mathbb{R}^n\rightarrow
\mathbb{R}^n$ such that
\begin{equation}
C^{-1} e^{n \mathfrak{\tilde{L}}(\nu)(x)}\leq J_{f}(x)\leq C e^{n\mathfrak{\tilde{L}}(\nu)(x)},
\end{equation}
for almost every $x\in \mathbb{R}^n$. Notice both $H'=H'(\beta,H,n)$, $C=C(\beta,H,n)$ are independent of the
size of $\textit{supp}(\nu)$.\\

For an arbitrary $\nu$ satisfying the hypotheses of the Proposition, we run an approximation argument.
By the argument before Proposition \ref{3.2} we know
\begin{equation}\label{3.7}\int_{\mathbb{R}^n}\log^+(\frac{1}{|x_0-y|})d|\mu|(y)<\infty,
\end{equation}
thus $\mathfrak{\tilde{L}}(\nu)$ is finite whenever $\mathfrak{L}(\nu)$ is finite. And they are both finite for almost every $x\in \mathbb{R}^n$.
Define $\nu_k=\nu|_{B(0,k)}=\beta \delta_{0}+\mu_\epsilon|_{B(0,k)}$ for $k=1,2,...$ similar to that in Lemma \ref{L3.5}. Thus for each $k$, $\nu_k$ is compactly
supported with $\|\mu_\epsilon|_{B(0,k)}\|<\epsilon$,
$\textsl{M}\nu_k(x_0)\leq \textsl{M}\nu(x_0)<\infty$. Apply the first part of the proof to $\nu_k$, we can find
$c_k$ for each $k$, such that $e^{n\mathfrak{L}(\nu_k)}=c_k e^{n\mathfrak{\tilde{L}}(\nu_k)}$. This $c_k$ 
depend on the size of compact support, thus on $k$. To eliminate the effect of $c_k$,
 multiply each q.c. map obtained for $\nu_k$ by constant $\sqrt[n]{c_k}$(in each direction), the dilatation $H'$ of these q.c.
maps doesn't change. Thus there exists a sequence of $H'$-q.c. map
$f_k:\mathbb{R}^n\rightarrow \mathbb{R}^n$,
$H'=H'(\beta,H,n)$ doesn't depend on $k$, such that
\begin{equation}\label{3.6}
C^{-1} e^{n \mathfrak{\tilde{L}}(\nu_k)(x)}\leq J_{f_k}(x)\leq C e^{n \mathfrak{\tilde{L}}(\nu_k)(x)},
\end{equation}
where $C=C(\beta,H,n)$.
By Lemma \ref{L3.5},
\begin{equation}\label{3.8}\int_{B}|e^{n \mathfrak{\tilde{L}}(\nu_k)(x)}-e^{n\mathfrak{\tilde{L}}(\nu)(x)}
|dx \rightarrow 0 \quad \mbox{as} \quad k  \rightarrow \infty ,\end{equation}
for every ball $B\subseteq \mathbb{R}^n$. Throughout the proof of Lemma \ref{L3.5}, it also shows
$$\dstyle \int_{B}e^{n\mathfrak{\tilde{L}}(\nu)(x)}dx<\infty.$$
\noindent This together with (\ref{3.6}) implies that there exists a constant $C\geq 1$ (depending on the sequence of q.c. maps and the ball $B$, i.e. $C=C(f_k, B)$), such that
$$\frac{1}{C}\leq \int_{B}J_{f_k}(x)
\leq C,$$ for all $k\in \mathbb{N}$. Without loss of generality, we can make $f_k(0)=0$. By the weak compactness of q.c. map discussed in Section 2(see also \cite{Va} and\cite{Rickman}(Lemma 8.8)), we know
there exists a subsequence of $H'$-q.c. maps, still denoted as $f_k$, such that $f_k$ converges weakly to an
$\tilde{H}$-q.c. map $f:\mathbb{R}^n\rightarrow \mathbb{R}^n$ where $\tilde{H}$ still depends on $\beta,H,n$. Now the
weak convergence property of Jacobian says
\begin{equation}\int_{\mathbb{R}^n} \chi_{\Omega}J_{f_k}(x)dx\rightarrow \int_{\mathbb{R}^n}\chi_{\Omega}J_{f}(x)dx \quad \mbox{as}
\quad k\rightarrow \infty,\end{equation}
for any $\Omega\subseteq \mathbb{R}^n$.
Together with (\ref{3.6}), (\ref{3.8}), this implies
\begin{equation}
\dstyle C^{-1}\int_{\Omega}e^{n \mathfrak{\tilde{L}}(\nu)(x)} dx
\leq \dstyle \int_{\Omega} J_{f}(x)dx \leq C\int_{\Omega}e^{n  \mathfrak{\tilde{L}}(\nu)(x)}dx,\\
\end{equation}
for arbitrary bounded subset $\Omega$ of $\mathbb{R}^n$. Therefore,
\begin{equation}
\dstyle C^{-1} e^{n  \mathfrak{\tilde{L}}(\nu)(x)}\leq J_{f}(x)\leq C e^{n  \mathfrak{\tilde{L}}(\nu)(x)},
\end{equation}
for almost all $x\in \mathbb{R}^n$, where $C=C(\beta,H,n)$. This is the desired property.
\end{proof}
From now on, we are trying to smooth the measure $\beta\delta_0$ while preserving the good property that it associates with a q.c. map of certain dilatation. The model case $\beta\delta_0$ gives very good intuition that we are starting with the cone
map $\varphi_\beta=\frac{x}{|x|^\beta}$, and then composite it with another q.c map which only
contributes to a measure $\mu_\epsilon$ of very small variation. However for later use, we need a measure with some smoothness, by which I mean
it should be written as $f(x)dx$ s.t. $f$ is continuous. So we want to smooth
$\beta\delta_0$
a little bit, but still not far away from the cone case: basically we want to find a smooth totally integrable
measure of the form $m=f(x)dx$ which is compactly supported in $B(0,1)$, $\dstyle \int d m=\beta$ and $e^{n  \mathfrak{L}(m)(x)}$ is
comparable to the Jacobian of some q.c map $\psi_\beta$, with $H(\psi_\beta)$ close to $\dstyle H(\varphi_
\beta)=\frac{1}{1-\beta}$. It is not hard. We now explicitly construct a smooth
q.c. map together with the measure $m_\beta$. Define map $\psi_{\beta}$ by polar coordinates: $\psi_{\beta}$ maps $r\mapsto R(r) $, $\theta\mapsto \theta$, where
$R(r)$ is the smooth function satisfying
\begin{equation}
\left\{\begin{array}{ll}
R(r)&=r, \quad \mbox{if}\quad r\in [0,\delta];\\
R(r)\ & \mbox{is smooth with}\quad 1-\beta \leq R'(r
)\leq 1\quad\\
 &\mbox{and satisfies q.c. condition } (\ref{q.c.}), \quad \mbox{if} \quad r\in(\delta,1-\delta); \\
R(r)&=r^{1-\beta},\quad \mbox{if}\quad r\in [1-\delta,\infty),\\ \end{array}\right.
\end{equation}
where q.c. condition means:
\begin{equation}\label{q.c.}
1-\beta \leq \frac{r\cdot R'(r)}{R}\leq 1.
\end{equation}
The (\ref{q.c.}) condition is easy to achieved because $\frac{r\cdot R'(r)}{R}=1 $ near $0$, and
$\frac{r\cdot R'(r)}{R}=1-\beta$ for $r\geq 1-\delta$.
\noindent $R$ is a smooth function on $[0,\infty)$, thus $\psi_\beta\in C^{\infty}(\mathbb{R}^n; \mathbb{R}^n)$.
Also $R'(r)>0$ for $r\in[0,\infty)$, thus $\psi_\beta: \mathbb{R}^n\rightarrow \mathbb{R}^n$ is a homeomorphism. Moreover, condition
(\ref{q.c.}) on $B(0,1)$ implies
the upper bound of dilatation $H(\psi_\beta)= \frac{1}{1-\beta}$.
In fact, the induced metric in polar coordinate by $\psi_\beta$ is:
$$dR^2+R^2 d\theta^2=R'^2(r)dr^2+R^2d\theta^2. $$
When (\ref{q.c.}) is satisfied,
$$R'^2(r)(dr^2+r^2d\theta^2) \leq dR^2+R^2 d\theta^2 \leq R'^2(r)(dr^2 + \frac{1}{(1-\beta)^2} r^2d\theta^2),$$
which means $\psi_\beta$ is a $\frac{1}{1-\beta}$-q.c. map.\\
$\log\frac{1}{|x|}$ is the fundamental solution of $\Delta^{n/2}$ on $\mathbb{R}^n$,
i.e. $$\Delta^{n/2}\log\frac{1}{|x|}=c_n\delta_0(x),$$ where $c_{n}$ is a constant depending
on $n$, for example $c_2=2\pi$,$c_4=8\pi^2$.
Define the signed Radon measure:
\begin{equation}
\label{m beta}m_\beta:=\frac{1}{c_n}\Delta^{n/2}\left(\frac{1}{n}\log J_{\psi_{\beta}}\right)(x)dx.\end{equation}

i) $m_\beta$ is compactly supported on
 $B(0,1)$, since $\psi_\beta$ coincides with $\varphi_\beta$ outside the ball $B(0,1-\delta)$. Also $m_\beta$ is a smooth measure. This is because $1-\beta \leq R'(r)\leq 1$ and $D\theta=id_{n-1}$
thus $J_{\psi_\beta}(x)>0$ for any $x\in \mathbb{R}^n$. Thus $\log J_{\psi_{\beta}}(x)$ is a smooth function defined
everywhere.\\

ii) $J_{\psi_{\beta}}\simeq e^{n  \mathfrak{L}(m_\beta )(x)}$.
\begin{proof}
\begin{equation}\label{3.9}
\begin{array}{lcl}
\mathfrak{L}(m_\beta)(x)&=&\dstyle \frac{1}{c_n}\int_{\mathbb{R}^n}\log \frac{1}{|x-y|}\cdot \Delta^{n/2}(\frac{1}{n}\log J_{\psi_{\beta}})(y)dy,\\
 &=&\dstyle \frac{1}{c_n}\int_{\mathbb{R}^n}\Delta^{n/2}(\log\frac{1}{|x-y|})\cdot \frac{1}{n}\log J_{\psi_\beta}(y)dy+C,\\
 &=&\dstyle \int_{{\mathbb{R}^n}}\delta_{x}(y) \frac{1}{n}\log J_{\psi_\beta}(y)dy+C,\\
 &=& \dstyle \frac{1}{n}\log J_{\psi_\beta}(x)+C,
\end{array}
\end{equation}
where the second equality is because the derivatives of $\log|x-y|$ has good decay at $\infty$, and the constant $C$ comes from the boundary term at $\infty$ when taking integration by part on $\mathbb R^n$. 
(\ref{3.9}) is the same as
$$J_{\psi_{\beta}}(x)\simeq e^{n\mathfrak{L}(m_\beta) (x)},$$ thus we proved ii).
\end{proof}
iii) $\int_{\mathbb{R}^n}dm_\beta=\beta$.
\begin{proof}
Notice, in polar coordinates $\varphi_{\beta}(x)$ is mapping $r\mapsto r^{1-\beta}$, $\theta\mapsto\theta$,
thus $$\psi_{\beta}=\varphi_{\beta}(x) \quad \mbox{when}\quad x\in \mathbb{R}^n\setminus B(0,1-\delta).$$
Therefore,
$$ \Delta^{n/2}(\frac{1}{n}\log J_{\psi_\beta})(x)=\Delta^{n/2}(\frac{1}{n}\log J_{\varphi_\beta})(x)=0,
$$
for $x \in \mathbb{R}^n \setminus B(0,1)$.
\begin{equation}\begin{array}{lcl}
\dstyle \int_{\mathbb{R}^n}dm_\beta&=&\dstyle \frac{1}{c_n}\int_{\mathbb{R}^n} \Delta^{n/2}(\frac{1}{n}\log J_{\psi_\beta})(x)dx\\
&=&\dstyle  \frac{1}{c_n} \int_{B(0,1)} \Delta^{n/2}(\frac{1}{n}\log J_{\psi_\beta})(x)dx.\\
\end{array}
\end{equation}
We use divergence theorem and the fact that $\psi_\beta$ coincides with $\varphi_\beta$ on a
neighborhood of $\partial B(0,1)$ to get,
\begin{equation}
\begin{array}{lcl}
&&\dstyle \frac{1}{c_n} \int_{B(0,1)} \Delta^{n/2}(\frac{1}{n}\log J_{\psi_\beta})(x)dx\\
&=&\dstyle \frac{1}{c_n}\int_{\partial B(0,1)}\nabla \Delta^{(n/2-1)}(\frac{1}{n}\log J_{\psi_\beta})\cdot d\vec n\\
&=&\dstyle \frac{1}{c_n}\int_{\partial B(0,1)}\nabla \Delta^{(n/2-1)}(\frac{1}{n}\log J_{\varphi_\beta})\cdot d\vec n\\
&=&\dstyle \frac{1}{c_n}\int_{B(0,1)}\Delta^{n/2}(\beta \log \frac{1}{|x|})dx\\
&=&\dstyle \beta.
\end{array}
\end{equation}
Thus we finished the proof of iii).
\end{proof}
With property i),ii),iii) we get another model case q.c map $\psi_\beta$ such that its corresponding measure
$m_\beta$ is a smooth measure. Now we simply repeat the same argument as in Proposition \ref{3.1} and Proposition 3.2 with $\beta \delta_0$ replaced by the smooth measure $m_\beta$, and map $\varphi_\beta$ replaced by $\psi_\beta$. 

\begin{proposition}\label{3.1'}
For each $H>1$, if constant $\beta<1$ satisfies $\frac{1}{1-\beta}\leq H$, and $\epsilon $ satisfies $\epsilon
\leq \epsilon_{0}(H,n)$ where $\epsilon_0(H, n)$ is the function mentioned in Theorem
\ref{bonk}, then for measure $\nu=m_\beta
+\mu_{\epsilon}$, where $\mu_\epsilon $ is a signed Radon measure on $\mathbb{R}^n$ with total
absolute integral $$\|\mu_\epsilon\|=\int_{\mathbb{R}^n}d|\mu_\epsilon|(y)<\epsilon ,$$
and \begin{equation}\label{log'}\int_{\mathbb{R}^n}\log^+|y|d|\mu_\epsilon|(y)<\infty,
\end{equation}
there exists a $H'$-q.c. map $f:\mathbb{R}^n\rightarrow \mathbb{R}^n$, such that

\begin{equation}
C^{-1} e^{n\mathfrak{L}(\nu)(x)}\leq J_{f}(x)\leq C e^{n \mathfrak{L}(\nu)(x)},
\end{equation}
where $C=C(\beta,H,n)$ and $H'=H'(\beta,H,n)$.
\end{proposition}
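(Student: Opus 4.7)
The plan is to run the argument of Proposition \ref{3.1} verbatim, only with $\varphi_\beta$ replaced by the smooth model map $\psi_\beta$ and the singular measure $\beta\delta_0$ replaced by the smooth measure $m_\beta$ defined in (\ref{m beta}). The construction of $\psi_\beta$ guarantees that it is $\frac{1}{1-\beta}$-q.c.\ (by the quasiconformality condition (\ref{q.c.})), and by hypothesis $\frac{1}{1-\beta}\le H$, so its inverse $g:=\psi_\beta^{-1}$ is an $H$-q.c.\ self-map of $\mathbb{R}^n$. Since $\|\mu_\epsilon\|<\epsilon\le\epsilon_0(H,n)$ and $\mu_\epsilon$ satisfies (\ref{log'}), every hypothesis of Theorem \ref{bonk} is met with this particular choice of $g$ and measure $\mu_\epsilon$.

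Applying Theorem \ref{bonk} yields an $H''$-q.c.\ map $f_\epsilon:\mathbb{R}^n\to\mathbb{R}^n$, with $H''=H''(H,n)$, satisfying
\begin{equation*}
C^{-1}e^{n\,\mathfrak{L}(\mu_\epsilon)\circ g(y)}\le J_{f_\epsilon}(y)\le C\, e^{n\,\mathfrak{L}(\mu_\epsilon)\circ g(y)}
\end{equation*}
for a.e.\ $y\in\mathbb{R}^n$. I then set $f:=f_\epsilon\circ\psi_\beta$, which is an $H'$-q.c.\ map with $H'=\tfrac{1}{1-\beta}\cdot H''$ depending only on $\beta, H, n$. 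The chain rule gives
\begin{equation*}
J_f(x)=J_{f_\epsilon}(\psi_\beta(x))\cdot J_{\psi_\beta}(x),
\end{equation*}
and substituting $y=\psi_\beta(x)$ (so that $g(y)=x$) into the estimate above produces $J_{f_\epsilon}(\psi_\beta(x))\simeq e^{n\,\mathfrak{L}(\mu_\epsilon)(x)}$.

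For the second factor I would invoke property (ii) of the smoothing construction, namely $J_{\psi_\beta}(x)\simeq e^{n\,\mathfrak{L}(m_\beta)(x)}$, which was verified in (\ref{3.9}). Multiplying these two equivalences together and using additivity of $\mathfrak{L}$ in its measure argument yields
\begin{equation*}
C^{-1}e^{n\,\mathfrak{L}(m_\beta+\mu_\epsilon)(x)}\le J_f(x)\le C\, e^{n\,\mathfrak{L}(m_\beta+\mu_\epsilon)(x)}\quad\text{a.e.\ }x\in\mathbb{R}^n,
\end{equation*}
which is exactly the desired estimate with $\nu=m_\beta+\mu_\epsilon$. The constants depend only on $\beta, H, n$ since the constant from property (ii) depends only on $\beta$ and $n$ (the function $R$ in the construction of $\psi_\beta$ is fixed once $\beta$ is fixed), and the constant from Theorem \ref{bonk} depends only on $H$ and $n$.

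The only nontrivial point, which is the reason $\psi_\beta$ rather than $\varphi_\beta$ must be used here in downstream applications, is ensuring that the ``reference" measure is smooth with bounded density; that was arranged once and for all by the construction in (\ref{m beta}), so no further analysis is needed. In particular, no approximation argument in the spirit of Proposition \ref{3.2} is required since we still impose (\ref{log'}); that extension (removing (\ref{log'}) and replacing $\mathfrak{L}$ by $\tilde{\mathfrak{L}}$) would be obtained by repeating the Lemma \ref{L3.5} weak-compactness routine, but is not part of the present statement.
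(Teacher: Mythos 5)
Your proposal is correct and is exactly the proof the paper intends: the paper explicitly omits the argument for Proposition \ref{3.1'}, stating that one repeats Proposition \ref{3.1} with $\beta\delta_0$ replaced by $m_\beta$ and $\varphi_\beta$ by $\psi_\beta$, which is precisely what you do (take $g=\psi_\beta^{-1}$, apply Theorem \ref{bonk}, set $f=f_\epsilon\circ\psi_\beta$, and use property ii) of $m_\beta$). No gaps; your remarks on constant dependence and on why no approximation argument is needed here are consistent with the paper.
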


Similar to Proposition \ref{3.2}, we now want to show a modified version of Proposition \ref{3.1'} for $\mathfrak{\tilde{L}}(\nu)$ without (\ref{log'}).
Still we choose a point $x_0$ very close to $0$, such that the maximal
function
$$ \textsl{M}\nu (x_0):=\sup_{r>0}\frac{1}{|B(x_0,r)|}\int_{B(x_0,r)}d|\nu|<\infty.$$
This is possible because the maximal function is finite almost every where. Then as the argument before,
\begin{equation}\label{3.3'}
\int_{\mathbb{R}^n}\log^+(\frac{1}{|x_0-y|})d|\nu|(y)<\infty.
\end{equation}
Thus as long as $\textsl{M}\nu(x)<\infty$,
\begin{equation}
\mathfrak{\tilde{L}}(\nu) := \int_{\mathbb{R}^n}\log \frac{|x_0-y|}{|x-y|}d\nu(y)
\end{equation}
is finite. Hence for almost every $x\in \mathbb{R}^n$ it is finite.
\begin{proposition}\label{3.2'}
For each $H>1$, if constant $\beta<1$ satisfies $\frac{1}{1-\beta}\leq H$, and $\epsilon $ satisfies $\epsilon
\leq \epsilon_{0}(H,n)$ where $\epsilon_0(H, n)$ is the function mentioned in Theorem
\ref{bonk}, then for measure $\nu=m_\beta
+\mu_{\epsilon}$, where $\mu_\epsilon $ is a signed Radon measure on $\mathbb{R}^n$ with total
absolute integral $$\int_{\mathbb{R}^n}d|\mu_\epsilon|(y)<\epsilon .$$ Then
there exists a $\tilde{H}$-q.c. map $f:\mathbb{R}^n\rightarrow \mathbb{R}^n$, such that

\begin{equation}
C^{-1} e^{n \mathfrak{\tilde{L}}(\nu)(x)}\leq J_{f}(x)\leq C e^{n \mathfrak{\tilde{L}}(\nu)(x)},
\end{equation}
where $C=C(\beta,H,n)$ and $\tilde{H}=\tilde{H}(\beta,H,n)$.
\end{proposition}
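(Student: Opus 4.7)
The plan is to mimic, almost verbatim, the argument used to derive Proposition \ref{3.2} from Proposition \ref{3.1}, with $\beta\delta_0$ replaced by $m_\beta$ and $\varphi_\beta$ replaced by $\psi_\beta$. The input we now have is Proposition \ref{3.1'}, which produces an $H'$-quasiconformal map whose Jacobian is comparable to $e^{n\mathfrak{L}(\nu)}$ under the extra assumption \eqref{log'}. Our task is to drop \eqref{log'} and replace $\mathfrak{L}$ by $\mathfrak{\tilde L}$, with constants $\tilde H$ and $C$ still depending only on $\beta$, $H$, $n$.

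First I would dispose of the case when $\mu_\epsilon$ is compactly supported, say $\mathrm{supp}(\mu_\epsilon)\subseteq B(0,k)$. Then $\mathrm{supp}(\nu)\subseteq B(0,\max\{1,k\})$ is compact, so \eqref{log'} holds trivially and $\mathfrak{L}(\nu)(x)$ and $\mathfrak{\tilde L}(\nu)(x)$ differ by an additive constant at every point where $M\nu(x)<\infty$. Applying Proposition \ref{3.1'} to $\nu$ and then dilating the resulting q.c. map by a scalar factor (the same in every coordinate), which changes $J_f$ by a multiplicative constant but does not alter $H'$, produces an $H'$-q.c. map $f$ with
\begin{equation*}
C^{-1} e^{n\mathfrak{\tilde L}(\nu)(x)}\leq J_f(x)\leq C e^{n\mathfrak{\tilde L}(\nu)(x)} \quad \text{a.e.},
\end{equation*}
where crucially $H'=H'(\beta,H,n)$ and $C=C(\beta,H,n)$ are independent of the size of the support of $\nu$. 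This last point is the whole reason for switching to $\mathfrak{\tilde L}$: the additive constant relating $\mathfrak{L}$ and $\mathfrak{\tilde L}$ depends on $\mathrm{diam}(\mathrm{supp}\,\nu)$, but after rescaling the q.c. map it is absorbed uniformly.

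Next, for a general $\mu_\epsilon$ I would set $\nu_k:=m_\beta+\mu_\epsilon|_{B(0,k)}$ for $k=1,2,\dots$. Each $\nu_k$ satisfies the hypotheses of the compactly supported case with the same $\beta,H,n,\epsilon$, so I obtain a sequence $f_k$ of $H'$-q.c. maps with
\begin{equation*}
C^{-1} e^{n\mathfrak{\tilde L}(\nu_k)(x)}\leq J_{f_k}(x)\leq C e^{n\mathfrak{\tilde L}(\nu_k)(x)},
\end{equation*}
uniform constants. I normalize by $f_k(0)=0$. Lemma \ref{L3.5} (applied with reference point $x_0$ in the definition of $\mathfrak{\tilde L}$) gives $e^{n\mathfrak{\tilde L}(\nu_k)}\to e^{n\mathfrak{\tilde L}(\nu)}$ in $L^1(B)$ for every ball $B$, and in particular $\int_B e^{n\mathfrak{\tilde L}(\nu)}<\infty$. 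Combined with the two-sided bound this yields a ball $B$ and a constant so that $\frac{1}{C}\leq \int_B J_{f_k}\leq C$ for all $k$, which is the hypothesis for the weak compactness theorem for q.c. maps recalled in Section 2.

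Finally I would extract a subsequence $f_k\rightharpoonup f$ to an $\tilde H$-q.c. map $f$, with $\tilde H=\tilde H(\beta,H,n)$, and use the weak convergence of Jacobians together with the $L^1_{loc}$ convergence of $e^{n\mathfrak{\tilde L}(\nu_k)}$ to pass to the limit on every bounded $\Omega\subseteq\mathbb{R}^n$, obtaining
\begin{equation*}
C^{-1}\int_\Omega e^{n\mathfrak{\tilde L}(\nu)(x)}\,dx \leq \int_\Omega J_f(x)\,dx\leq C\int_\Omega e^{n\mathfrak{\tilde L}(\nu)(x)}\,dx.
\end{equation*}
Since $\Omega$ is arbitrary, the pointwise a.e. two-sided bound follows. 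The step I expect to be the main subtlety is not any single estimate but keeping track that all constants $(H', \tilde H, C)$ remain independent of $k$ throughout the truncation-and-rescaling procedure; this is exactly where the passage from $\mathfrak{L}$ to $\mathfrak{\tilde L}$ is essential, as already observed in the proof of Proposition \ref{3.2}.
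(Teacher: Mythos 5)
Your proposal is correct and follows exactly the route the paper intends: the paper explicitly omits the proof of Proposition \ref{3.2'}, stating that one repeats the arguments of Propositions \ref{3.1} and \ref{3.2} with $\beta\delta_0$ replaced by $m_\beta$ and $\varphi_\beta$ by $\psi_\beta$, which is precisely your compactly-supported-case-plus-truncation, rescaling, Lemma \ref{L3.5}, and weak-compactness argument with constants tracked independently of $k$.
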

\noindent To repeat the same argument as in Proposition \ref{3.1} amd \ref{3.2} with $\beta\delta_0$ by $m_\beta$, one can get the proof of Proposition \ref{3.1'} and \ref{3.2'}. Therefore we omit the proof of these two theorems here.

\begin{remark} Notice the measure $\nu$ could have total mass very close to 1. This is how our proposition is different from Theorem 1.1 in \cite{bonk}.
\end{remark}
\noindent With the above preparation, it is ready to prove Theorem \ref{main}.
\begin{proof} of theorem \ref{main}: Consider the noncompact, complete manifold $(M, e^{2w}dx^2)$ with normal metric,
$$\dstyle w(x)=\frac{1}{4\pi^2}\int_{\mathbb{R}^4}\log\frac{|y|}{|x-y|}Q(y)e^{4w(y)}dy+C.$$
Define the measure $$\mu:=\frac{1}{4\pi^2} Q(x)e^{4w(x)}dx,$$
then
$$\alpha:=\int_{\mathbb{R}^4}d\mu(x)<1.$$
Since $\mu$ is a smooth measure, every point $x\in \mathbb{R}^4$ has property
$$\textsl{M}(\mu)(x):= \sup_{r>0}\frac{1}{|B(x,r)|}\int_{B(x,r)}d|\mu|<\infty.$$
Thus we can just let $x_0=0$. To use Proposition \ref{3.2'} we need to choose proper $H$ and $\nu$.
We can choose any $H>\frac{1}{1-\alpha}$, for example, define $H=\frac{100}{1-\alpha}>1$. For such $H$ and dimension 4, there exists $\epsilon_0=\epsilon_0(H,4)$ as
shown in Theorem \ref{bonk}. Now, since
\begin{equation}
\dstyle \int_{M^4} |Q| dv_{M}=\dstyle \int_{\mathbb{R}^4} |Q| e^{4w}dx< \infty,
\end{equation}
there exists $R\gg 1$, such that
\begin{equation}
\dstyle \int_{\mathbb{R}^4\setminus B(0,R)} |Q| e^{4w}dx< \min\{\epsilon_0,\frac{1-\alpha}{100}\}.
\end{equation}
With this $R$, define
$$\mu_\epsilon=\mu|_{\mathbb{R}^4\setminus B(0,R)}.$$
It is obvious that $$ \int_{\mathbb{R}^4\setminus B(0,R)} d|\mu_\epsilon|(y)<\epsilon_0$$
Set $\beta=\frac{1}{4\pi^2}\int_{ B(0,R)} Q e^{4w}dx$, so $\beta<\alpha+\frac{1-\alpha}{100}$. Therefore
$\frac{1}{1-\beta}<H$. Consider $\nu=m_{\beta}+\mu_\epsilon$, where $m_\beta$ is defined by (\ref{m beta}), with the choice $x_0=0$, $\mathfrak{\tilde{L}}(\nu)$ is finite almost every where. By Proposition \ref{3.2'}
there exists a $\tilde{H}$-q.c. map $f:\mathbb{R}^4\rightarrow \mathbb{R}^4$ such that,
\begin{equation}\label{main1} C^{-1}e^{4\mathfrak{\tilde{L}}(\nu)(x)}\leq J_{f}(x)\leq C e^{4\mathfrak{\tilde{L}}(\nu)(x)},\end{equation} where
$C=C(\beta,H,4)$, $\tilde{H}=\tilde{H}(\beta,H,4)$. Since $\beta$ and $H$ depend only on $\alpha$,
$C=C(\alpha,4)$ and $\tilde{H}=\tilde{H}(\alpha,4)$.
On the other hand, with $\nu=m_{\beta}+\mu_\epsilon$ we have, 
\begin{equation}\begin{array}{lcl}
\displaystyle\mu&=\displaystyle& \mu_\epsilon+\mu|_{B(0,R)}\\
&=&\mu_\epsilon +m_{\beta}+\mu|_{B(0,R)}-m_{\beta}\\
&=&\displaystyle \nu+(\mu|_{B(0,R)}-m_\beta)=\nu+h(x)dx,\\
\end{array}
\end{equation}
where $$h(x)=\frac{1}{8\pi^2}
\chi_{B(0,R)}(x)2Q(x)e^{4w(x)}-\frac{1}{8\pi^2}\Delta^{2}(\frac{1}{4}\log J_{\psi_\beta})(x).$$
$h(x)$ is a smooth function of compact support, so
$$\dstyle \sup_{x\in \mathbb{R}^4}h(x)\leq C(M).$$
Here $C(M)$ is a constant depending on $Q(x)e^{4w(x)}$ in $B(0,R)$, thus depending on the manifold $M$.
Meanwhile, by property ii) of $m_\beta$,
$$\int_{\mathbb{R}^4} h(x)dx=0. $$
Therefore $h(x)$ is Hardy $H^1$ function, whose $H^1$ norm is determined by $M$(more precisely, the supremum of $Q(x)e^{4w(x)}$ in $B(0,R)$ together with $R$). It is well known
that $\log$ is a BMO function and
$$\|f*g\|_{L^{\infty}}\leq \|f\|_{BMO}\cdot \|g\|_{H^1}.$$
\\
\noindent Thus,
\begin{equation}
\begin{array}{lcl}
\dstyle \left|\mathfrak{\tilde{L}}(\mu-\nu)(x)\right|&=&\dstyle \left|\int_{\mathbb{R}^4}\log
\frac{|y|}{|x-y|} h(y)dy\right| \\
&=&\dstyle \left|C_1+ \int_{\mathbb{R}^4} \log \frac{1}{|x-y|}h(y)dy\right| \\
&\leq & \dstyle |C_1|+ \|\log \|_{BMO}\cdot \|h\|_{H^1}\leq C_2.
\end{array}
\end{equation}
Here $C_1$ arises from the boundedness of both $h$ and its support $R$, thus on manifold $M^4$. Hence $C_2$ depends on the the manifold $M^4$. 
\noindent Now it follows that
\begin{equation}\label{main2}e^{-C_2}\leq e^{n\mathfrak{\tilde{L}}(\mu-\nu)(x)}\leq e^{C_2}.\end{equation}
Combining (\ref{main1}) and (\ref{main2}), we have
 $$C^{-1}e^{4\mathfrak{\tilde{L}}(\mu)(x)}\leq  J_{f} (x)\leq C e^{4\mathfrak{\tilde{L}}(\mu)(x)},$$
where $f$ is $\tilde{H}=\tilde{H}(\alpha,4)$-q.c. map and $C=C(M)$.
This finishes the proof of the main theorem.
\end{proof}

Theorem \ref{c1.2} follows directly from Theorem \ref{main}. The isoperimetric constant depends on $M^4$
since $C_2$ in the proof of the Theorem \ref{main} depends on $M^4$. But if the domain $\Omega$ is outside a very large compact set,
say $B(0,10R)$, as it is far away from $m_\beta$ and $\mu|_{B(0,R)}$, the isoperimetric constant depends only on the constant in 
$$C^{-1}e^{4\mathfrak{\tilde{L}}(\nu)(x)}\leq J_{f}(x)\leq C e^{4\mathfrak{\tilde{L}}(\nu)(x)}, $$
which depends only on $\alpha$ and $n$(here it is 4).
One can see the proofs of Theorem \ref{main} and Corollary \ref{c1.2} are based on Proposition \ref{3.2'}, which is true for general even dimensions. Thus Theorem \ref{main} is not restricted to the case $n=4$.
\begin{remark}\label{Remark 3.1} 
Moreover there is no way
to get a uniform bound independent of $M^4$ because the behavior of the manifold within the
compact set cannot be controlled. For example, consider a sequence of manifolds $\{M_{k}\}_{k=1}^{\infty}$ in Figure 3.8 with cylinder part $k\rightarrow \infty$. $M_k$ satisfies the conditions in Theorem \ref{main} with some uniform $\alpha$. On each manifold, the isoperimetric inequality holds but with constant $C_k\rightarrow \infty$ as $k\rightarrow \infty$.\\
\begin{figure}[ht]
\centering
\includegraphics[totalheight=0.15\textheight]{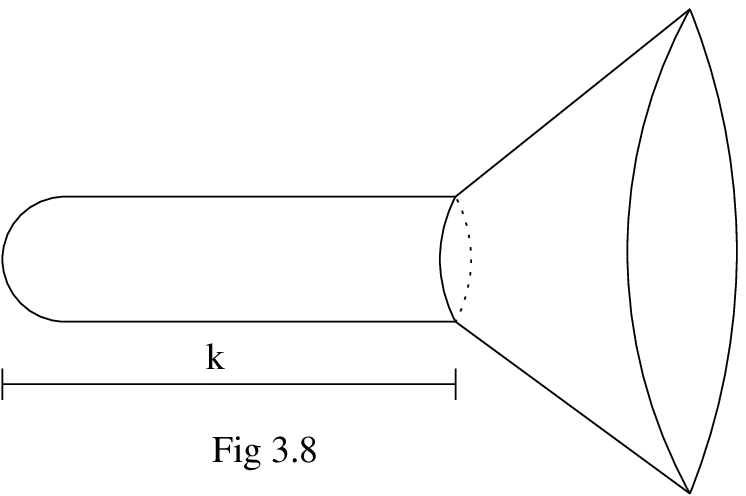}
\end{figure} 
\end{remark}

\section{Strong $A_\infty$ weight and quasiconformal map}

We begin by talking about some basics of $A_p$ and strong $A_\infty$ weights.
For a positive locally integrable function $\omega$, we call it an $A_p$ weight $p>1$, if
\begin{equation}\label{Ap}
\frac{1}{|B|}\int_{B}\omega(x)dx\cdot \left(\frac{1}{|B|}\int_{B}\omega(x)^{-{p'}/{p}}dx\right)^{{p}/{p'}}\leq A<\infty,
\end{equation}
for all balls $B$ in $\mathbb{R}^n$. Here $p'$ is conjugate to $p$: $\frac{1}{p'}+\frac{1}{p}=1$. The constant $A$ is uniform for all $B$ and we call the smallest such constant $A$ the $A_p$ bound of $\omega$.
The definition of $A_1$ weight is given by taking limit of $p\rightarrow 1$ in (\ref{Ap}),
which gives
$$\dstyle \frac{1}{|B|} \int_{B}\omega dx \leq A \omega, $$
for almost all $x\in \mathbb{R}^n$.
Thus it is equivalent to say the maximal function of the weight is bounded by the weight itself:
$$\textsl{M}\omega(x)\leq A' \omega(x).$$
The most fundamental property of $A_p$ weight is the reverse H\"{o}lder inequality:
if $\omega$ is $A_p$ weight for some $p\geq 1$, then there exists an $r>1$ and a $c>0$, such that
\begin{equation}\label{reverse holder}
\dstyle \left( \dstyle \frac{1}{|B|} \int_{B}\omega^r dx \right)^{1/r}\leq \frac{c}{|B|} \dstyle \int_{B}\omega dx,
\end{equation}
for all balls $B$.

The notion of strong $A_\infty$ weight was first introduced by David and Semme in \cite{D-S}.
A positive locally integrable function $\omega(x)$ on $\mathbb{R}^n$ is called an $A_\infty$ weight if for each $\epsilon>0$,
there is a $\delta>0$ such that if $Q\subseteq \mathbb{R}^n$ is any cube and $E\subseteq Q$ satisfies $|E|\leq \delta |Q|$, then
$\omega(E)\leq \epsilon \omega(Q)$. Here $|E|$ is the Lebesgue measure of set $E$, and $\omega(E)=\int_{E}\omega(x)dx$.
In this definition $\delta$ doesn't depend on $Q$ and $E$.
Given a weight $\omega$, and a rectifiable arc $\gamma$, the $\omega$-length of $\gamma$ is defined to be
$$\int_\gamma \omega^{\frac{1}{n}}(s)|ds|,$$
where $|ds|$ is the differential of arc length.
Now define the geodesic distance associated to $\omega$
\begin{equation}
d_{\omega}(x,y)=\inf_{\gamma}\omega\mbox{-length of }\gamma,
\end{equation}
with infimum taken over all rectifiable $\gamma$ connecting $x$ and $y$.

\noindent Define the measure distance $\delta(x,y)$ to be:
\begin{equation}
\delta(x,y)=\left(\int_{B_{x,y}}\omega(z)dz\right)^{1/n},
\end{equation}
where $B_{x,y}$ is the ball with diameter $|x-y|$ that contains $x$ and $y$. One can prove
$\delta(x,y)$ is a quasidistance, and if $\omega$ is an $A_\infty$ weight as defined before, then
\begin{equation}\label{A}
d_\omega(x,y)\leq C\delta(x,y)
\end{equation}
for all $x,y\in \mathbb{R}^n$.
If in addition to the above inequality, $\omega$ also satisfies the reverse inequality, i.e.
\begin{equation}\label{strong A}
\delta(x,y)\leq Cd(x,y),
\end{equation}
for all $x,y\in \mathbb{R}^n$, then we say $\omega$ is a strong $A_\infty$ weight.
(\ref{strong A}) is equivalent to requiring that
\begin{equation}\label{strong A'}
\omega(B)^{1/n}\leq C (\omega\mbox{-length of }\gamma).
\end{equation}
Gehring proved that the Jacobian of a quasiconformal map $f$ on $\mathbb{R}^n$ is always a strong $A_\infty$ weight. Thus by
Theorem \ref{main} $e^{nw}$ is also a strong $A_\infty$ weight, which is Corollary \ref{cA}.
Briefly, Gehring's proof goes by showing that on one hand
$$ \omega(B_{x,y})\simeq |f(x)-f(y)|^{n},
$$
and on the other hand, if $\gamma$ is any curve in the ball $B_{x,y}$ connecting $x$ and $y$, then by similar argument as in (\ref{5.1}) below,
$$|f(x)-f(y)|\leq C (\omega \mbox{-length of }\gamma).$$ Therefore (\ref{strong A'}) holds. Also it is not hard to
prove $J_{f}$ satisfies reverse H\"{o}lder inequality, so (\ref{A}) is valid.
But the converse statement is not true: not every strong $A_\infty$ weight is comparable to the Jacobian
of a q.c map. In fact Laakso \cite{La} showed that there is a strong $A_\infty$ weighted metric not bi-Lipschitz to Euclidean $\mathbb{R}^n$.
Another important property of strong $A_\infty$ weight which is proved by David and Semme in \cite{D-S} is that a strong $A_\infty$ weight has Sobolev inequality:
\begin{equation}\dstyle \left( \int_{\mathbb{R}^n}|f(x)|^{p^*}\omega(x)dx\right)^{1/{p^*}}\leq C \dstyle \left(\int_{\mathbb{R}^n}
(\omega^{-\frac{1}{n}}(x)|\nabla f(x)|)^p\omega(x)dx \right)^{1/p},
\end{equation}
where $1\leq p<n$, $p^*=\frac{np}{n-p}$. Take $p=1$, it is the usual isoperimetric inequality.
The relation between strong $A_\infty$ weight and other $A_p$ weights is briefly stated below. Every $A_1$ weight is
strong $A_\infty$, but for $p>1$ there is an $A_p$ weight which is not strong $A_\infty$. Conversely, for
any $p$ there is a strong $A_\infty$ weight but it is not $A_p$. It is easy to verify by definition the function $|x|^{\alpha}$ is $A_1$ thus strong $A_\infty$ if $-n<\alpha\leq 0$; it is not $A_1$ but still strong $A_\infty$ if $\alpha>0$.
And $|x_1|^\alpha$ is not strong $A_\infty$ for any $\alpha>0$ as one can by choosing a $\gamma$ contained in the
$x_2$-axis. As is stated in Theorem \ref{A1} we want to show when $Q$-curvature is nonnegative, $e^{nw}$ is $A_1$ weight.
\begin{proof} of Theorem \ref{A1}: To prove $e^{nw}$ is an $A_1$ weight, we need to show
\begin{equation}\label{eqnA1}
\textsl{M}(e^{nw})(x) \leq C e^{nw(x)}
\end{equation}
for all $x\in \mathbb{R}^n$.
\begin{equation}
\begin{array}{lcl}\label{73}
\dstyle \frac{\textsl{M}(e^{nw})(x)}{e^{nw(x)} }&=& \dstyle \sup_{r>0}\frac{\frac{1}{|B(x,r)|}
\dstyle \int_{B(x,r)} \exp\left( \dstyle \frac{n}{c_n}\int_{\mathbb{R}^n}\log\frac{|z|}{|y-z|} Q(z)e^{nw(z)}dz \right)dy }{ \exp\left(\dstyle \frac{n}{c_n}\int_{\mathbb{R}^n}\log\frac{|z|}{|x-z|} Q(z)e^{nw(z)}dz\right)}\\
&=&\dstyle \sup_{r>0}\frac{1}{|B(x,r)|}
\dstyle\int_{B(x,r)}\exp\left(\dstyle \frac{n}{c_n}\int_{\mathbb{R}^n}\log\frac{|x-z|}{|y-z|} Q(z)e^{nw(z)}dz \right)dy\\
\end{array}
\end{equation}
If $Q\equiv 0$, then of course (\ref{eqnA1}) is true. Otherwise, define the probability measure $\nu(z)=\frac{Q(z)e^{nw(z)}dz}{c_n\alpha}$, by the convexity of $\exp$ and nonnegativity of $Q$, we get for any $r$,
\begin{equation}\label{74}
\begin{array}{lcl}
&&\dstyle\frac{1}{|B(x,r)|}
\int_{B(x,r)}\exp\left(\dstyle \frac{n}{c_n}\int_{\mathbb{R}^n}\log\frac{|x-z|}{|y-z|} Q(z)e^{nw(z)}dz \right)dy\\
&\leq & \dstyle\frac{1}{|B(x,r)|}
\int_{B(x,r)}\dstyle \int_{\mathbb{R}^n}\left(\frac{|x-z|}{|y-z|}\right)^{n\alpha} d\nu(z)\ dy\\
&=&\dstyle \int_{\mathbb{R}^n}\frac{1}{|B(x,r)|}
\int_{B(x,r)} \left(\frac{|x-z|}{|y-z|}\right)^{n\alpha}  dy \ d\nu(z).\\
\end{array}
\end{equation}

As discussed in the previous paragraph, $\frac{1}{|x|^{n\alpha}}$ is an $A_1$ weight on $\mathbb{R}^n$ when $0\leq \alpha<1$,
we have for any $x$, and $r>0$
\begin{equation}\dstyle \frac{\frac{1}{|B(x,r)|}
\dstyle \int_{B(x,r)} \left(\frac{1}{|y|^{n\alpha}}\right)  dy}{\dstyle \frac{1}{|x|^{n\alpha}}}\leq C,
\end{equation}
where $C$ is independent of $x$ or $r$. This is also true with the same constant $C$ when shifting by any $z\in \mathbb{R}^n$,

\begin{equation} \dstyle \frac{\frac{1}{|B(x,r)|}
\dstyle \int_{B(x,r)} \left(\frac{1}{|y-z|^{n\alpha}}\right)  dy}{\dstyle \frac{1}{|x-z|^{n\alpha}}}\leq C,
\end{equation}
Thus

\begin{equation}
\begin{array}{lcl}\label{77}
&&\dstyle \int_{\mathbb{R}^n}\frac{1}{|B(x,r)|}
\int_{B(x,r)} \left(\frac{|x-z|}{|y-z|}\right)^{n\alpha}  dy \ d\nu(z)\\
&\leq & \dstyle \int_{\mathbb{R}^n} C\ d\nu(z) = C,\\
\end{array}
\end{equation}
for any $r>0$ and $x\in \mathbb{R}^n$.
(\ref{eqnA1}) is established by plugging (\ref{77}) into (\ref{74}) and then (\ref{73}). This concludes the proof.
\end{proof}

\section{Proof of bi-Lipschitz parametrization}
In this section we are going to prove that under the same assumptions as in Theorem \ref{main}, the manifold
$(M^4,e^{2w}|dx|^2)$ is bi-Lipschitz to the Euclidean space. This gives a sufficient condition of bi-Lipschitz
to the Euclidean space. The argument to prove bi-Lipschitz parametrization from the existence of q.c map is standard. 
\begin{proof}of Theorem \ref{bilip}
The metric on $(M^4,e^{2w}|dx|^2)$ gives the distance function
$$d(x,y)=\inf_{\gamma} \int_{\gamma} e^{w(x)}ds,$$
where $ds$ is the differential of arc length, $\gamma$ connecting $x$ and $y$. Consider
the $\tilde{H'}$-q.c. map $f$ and the weight function $e^{4w}$.
For any curve $\gamma$ connecting $x$ and $y$,
\begin{equation}\label{5.1}\begin{array}{lcl}
\dstyle |f(x)-f(y)|&\leq &\dstyle |\int_{0}^{1}\frac{d}{dt}f(\gamma(t))dt|\\
&=&\dstyle |\int_{0}^1 Df(\gamma(t))\cdot \dot{\gamma}(t)dt|\\
&\leq&\dstyle  C\int_{\gamma}J_{f}^{\frac{1}{4}}(\gamma(s))ds\\
&\leq&C\dstyle  \int_{\gamma}e^w ds\\
&\leq &C (e^{4w}\mbox{-length of }\gamma).
\end{array}
\end{equation}
thus
$$|f(x)-f(y)|\leq C d(x,y).$$

\noindent On the other hand
\begin{equation}\begin{array}{lcl}\delta(x,y)&=&\dstyle \left(\int_{B_{x,y}}e^{4w(z)}dz\right)^{\frac{1}{4}}\\
&=&\omega(B_{x,y})^{\frac{1}{4}}\\
&\simeq &|f(x)-f(y)|,
\end{array}
\end{equation}
where $\simeq$ is because of the change of variable formula and a standard distortion theorem.
Combining the above two inequalities, we get
$$\delta(x,y)\leq C|f(x)-f(y)|\leq C d(x,y).$$
Also, (\ref{A}) says since $e^{4w}$ is a strong $A_\infty$ weight,
$$d(x,y)\leq C \delta(x,y).$$ Therefore
$$d(x,y)\simeq |f(x)-f(y)|$$
for all $x,y\in \mathbb{R}^4$ with implicit constant in $\simeq$ depending on $M^4$.
 Thus $f$ is a bi-Lipschitz map from $(M^4,e^{2w}|dx|^2)$ to Euclidean space $(\mathbb{R}^4, |dx|^2)$.
\end{proof}
\noindent This argument is true for all even dimension $n$.

Department of Mathematics, Princeton University, Princeton, NJ 08544, USA
E-mail Address: wangyi@math.princeton.edu

\end{document}